\newtheorem{theorem}{Theorem}[section]
\newtheorem{lemma}[theorem]{Lemma}
\newtheorem{corollary}[theorem]{Corollary}
\newtheorem{algorithm}[theorem]{Algorithm}
\newtheorem{remark}{Remark}
\newcommand{\smallo}{{\scriptstyle \mathcal{O}}}
\begin{document}

\title{Optimal approximation order of piecewise constants on convex partitions}

\author{
	Oleg Davydov
		\thanks{Department of Mathematics, Justus Liebig University, Arndtstrasse 2, 35392 Giessen, Germany}
	\and
	Oleksandr Kozynenko
		\thanks{Department of Mechanics and Mathematics, Oles Honchar Dnipro National University, pr. Gagarina 72, 49010 Dnipro, Ukraine}
	\and
    Dmytro Skorokhodov
    	\thanks{Department of Mechanics and Mathematics, Oles Honchar Dnipro National University, pr. Gagarina 72, 49010 Dnipro, Ukraine}
}

\maketitle

\begin{abstract}
We prove that the error of the best nonlinear $L_p$-approximation by piecewise constants on convex partitions is $\mathcal{O}\big(N^{-\frac{2}{d+1}}\big)$,
where $N$ is the number of cells, for all functions in the Sobolev space $W^2_q(\Omega)$ on a cube $\Omega\subset\mathbb{R}^d$, $d\geqslant 2$, as soon as 
$\frac{2}{d+1} + \frac{1}{p} - \frac{1}{q}\geqslant 0$. The approximation order $\mathcal{O}\big(N^{-\frac{2}{d+1}}\big)$ is achieved on a polyhedral partition 
obtained by anisotropic refinement of an adaptive dyadic partition. Further estimates of the approximation order  from the above and
below are given for various Sobolev and Sobolev-Slobodeckij spaces  $W^r_q(\Omega)$ embedded in $L_p(\Omega)$, some of which also improve the
standard estimate $\mathcal{O}\big(N^{-\frac 1d}\big)$ known to be optimal on isotropic partitions.
\end{abstract}

\section{Introduction}

Nonlinear approximation with piecewise polynomials seeks a better approximation in comparison to linear methods by adapting the 
spline  to the local behavior of the data assuming it stems from a function in a smoothness space. Initial multivariate results of this type 
(Birman and Solomyak~\cite{BS67}) were obtained 
by adjusting partitions to the local Sobolev energy of the function, but more recent research focuses on the $n$-term approximation with appropriate wavelet-like bases 
\cite{DeVore98,Temlyakov11} and adaptive algorithms that recursively reduce the error function \cite{DeVore09}. 
Very little beyond the results of \cite{BS67} is known in the  multivariate ``free partition'' setting, see e.g.\ \cite[Section 6.5]{DeVore98}.

In this paper we continue investigations in~\cite{D11,D12} and study approximation properties of piecewise constants on arbitrary convex partitions
that are freely adjusted to functions in Sobolev spaces. This adjustment, and notably the use of anisotropic convex polyhedral partitions allows to nearly double the 
approximation order in comparison to ``isotropic'' partitions employed in \cite{BS67}. Interestingly, these results do not extend to 
higher order splines, where introducing ``anisotropy'' to a partition does not bring any improvement in the order of approximation \cite{D12}
(see also~\cite{BabBabParSko}), and a promising approach is to consider piecewise polynomials 
on several overlaid polyhedral partitions \cite{D14}.

In comparison to~\cite{D12} we obtain improved convergence orders on much wider classes of Sobolev spaces in the setting 
typical for nonlinear approximation where the $L_p$-metric error is considered for $L_q$-metric classes of functions with $q<p$.  
To achieve this we generalize the cell counting techniques introduced in \cite[Theorem~2.1]{BS67}. 

Let $\Omega\subset\mathbb{R}^d$, $d\geqslant 2$, be a bounded domain. A finite collection $\triangle$ of subdomains $\omega\subset \Omega$, referred to as {\it cells} throughout the paper, is called a \textit{partition} of $\Omega$ provided that $\omega \cap \omega' = \varnothing$, for every $\omega,\omega'\in\triangle$, $\omega \ne \omega'$, and $\sum_{\omega \in \triangle} \left|\omega\right| = \left|\Omega\right|$, where $\left|\cdot\right|$ stands for the Lebesgue measure in $\mathbb{R}^d$. 
We call a partition $\triangle$ of $\Omega$ \textit{convex} if every cell $\omega\in\triangle$ is convex. For $N\in\mathbb{N}$, denote by $\mathfrak{D}_N$ the set of all convex partitions of $\Omega$ comprising at most $N$ cells. 
For simplicity, we assume that $\Omega$ is a cube in $\mathbb{R}^d$, even if the results of the paper may be naturally extended to more general domains by for example splitting them
into a finite number of affine images of cubes.

For $1\leqslant q \leqslant \infty$ and $r\in\mathbb{N}$, let $W^r_q \left( \Omega\right)$ be the Sobolev space of measurable functions $f:\Omega\to\mathbb{R}$ endowed with the standard norm and seminorm
\[
\|f\|_{W^r_q(\Omega)} := \sum\limits_{{\bf k}\in\mathbb{Z}_+^d\,:\,|{\bf k}| \leqslant r} \left\|D^{\bf k} f\right\|_{L_q\left(\Omega\right)}\quad\textrm{and}\quad |f|_{W^r_q(\Omega)} := \sum\limits_{{\bf k}\in\mathbb{Z}_+^d\,:\,|{\bf k}| = r} \left\|D^{\bf k} f\right\|_{L_q\left(\Omega\right)},
\]
where ${\bf k} = \left(k_1,\ldots,k_d\right)\in\mathbb{Z}_+^d$ is non-negative integer multi-index, $|{\bf k}|:=k_1+\cdots+k_d$, and $D^{\bf k}f$ denotes the 
(generalized) partial
derivative of order ${\bf k}$. We set $W^0_q(\Omega):=L_q(\Omega)$. For $f\in W^1_1(\Omega)$, we denote by $\nabla f$ the gradient of $f$. 

For $1 \leqslant q < \infty$ and $r \in \mathbb{R}_+\setminus\mathbb{Z}_+$, we consider the Sobolev-Slobodeckij space \cite{Slobodeckiy58}:
\[
\displaystyle W^r_q (\Omega) := \left\{f\in W^{\lfloor r\rfloor }_q(\Omega) : |f|_{W^r_q(\Omega)}< \infty\right\},\quad
\|f\|_{W^r_q(\Omega)} := \|f\|_{W^{\lfloor r\rfloor}_q(\Omega)}+|f|_{W^r_q(\Omega)},
\]
where 
$$
|f|_{W^r_q(\Omega)} := \sup\limits_{{\bf k}\in\mathbb{Z}_+^d:|{\bf k}|=\lfloor r\rfloor } 
\Big( \int_\Omega \int_\Omega \frac{\left|D^{\bf k} f({\bf x}) - D^{\bf k} f({\bf y})\right|^q}{\left| {\bf x}-{\bf y}\right|^{\left\{r\right\}q+d}}\,
{\rm d}{\bf x}{\rm d}{\bf y}\Big)^{\frac{1}{q}},  $$
$\lfloor r\rfloor $ and $\{r\}$, respectively, are the integer and fractional parts of $r$, and $|{\bf x}|$ stands for the Euclidean norm of ${\bf x}\in\mathbb{R}^d$. 
Spaces $W^r_q(\Omega)$ coincide with the Besov spaces $B^r_{q,q}(\Omega)$ (see~\cite[p. 323]{Triebel_78}).

For a partition $\triangle$ of $\Omega$, we denote by $\mathcal{S}_0(\triangle)$ the space of piecewise constant functions $s:\Omega\to\mathbb{R}$ that are constant on
every cell $\omega\in\triangle$. For $1\leqslant p \leqslant\infty$, we define the error of the best $L_p$-approximation of a function 
$f\in L_p(\Omega)$ by piecewise
constant functions on partitions from $\mathfrak{D}_N$:
\[
E_N(f)_{p} := \inf\limits_{\triangle\in\mathfrak{D}_N}\inf\limits_{s\in \mathcal{S}_0\left(\triangle\right)}\left\|f-s\right\|_{L_p(\Omega)}.
\]
Since the partition $\triangle$ is allowed to depend on $f$, the optimal or near-optimal approximations $s$ do not belong to a linear space of functions when $N$ is fixed, 
and the methods of constructing them are referred to as nonlinear approximation \cite{DeVore98}.  There are very few results in the literature in this ``free partition'' setting, see \cite[Section 6.5]{DeVore98} and \cite{D11}. In what follows we will consider estimates for the order of approximation of functions $f\in W^r_q(\Omega)$ with $q < \infty$ only. Although results obtained in this paper hold true for functions $f\in W^r_\infty(\Omega)$, they also follow immediately from the results obtained in~\cite{D12}. This is the main motivation for us to avoid consideration of the case $q=\infty$.

Birman and Solomyak's result on piecewise polynomial approximation \cite[Theorem~3.2]{BS67} implies that 
\begin{equation}
	\label{BS}
	E_N(f)_p=\mathcal{O}\big(N^{-\frac{r}{d}}\big),\quad f\in W^r_q(\Omega),\quad 0 < r\leqslant 1,
\end{equation}
as soon as $q$ satisfies
\begin{equation}
	\label{BSr}
	\frac{r}{d} + \frac{1}p-\frac 1q > 0\quad\text{and}\quad q < \infty,
\end{equation}
where in the case $p=\infty$ we set $\frac{1}{p} := 0$. Note that the construction of a sequence of partitions $\left\{\triangle_N\right\}_{N=1}^\infty$ that attains the order in \eqref{BS} for $q<p$ is based on adaptively refined dyadic
subdivisions of $\Omega$. 
Considering $r>1$, it follows from the embedding $W^r_q(\Omega)\subset W^1_{q'}(\Omega)$, with $\frac 1{q'} =  \frac{r-1}{d}+\frac 1q$ 
(see~\cite[p. 328]{Triebel_78}) that
\begin{equation}
	\label{BSc}
	E_N(f)_p=\mathcal{O}\big(N^{-\frac 1d}\big),\quad f\in W^r_q(\Omega),\quad  r\geqslant 1,
\end{equation}
under the same restrictions \eqref{BSr}.

No improvement of the order $N^{-\frac 1d}$ is possible for any smooth non-constant function $f$ on ``isotropic'' partitions
(see~\cite{D11}). 
Recall that a sequence of convex partitions $\left\{\triangle_N\right\}_{N=1}^\infty$ is said to be  \emph{isotropic} if there
exists $\gamma > 0$ such that $\text{diam}\,(\omega) \leqslant \gamma \rho(\omega)$ for all $\omega\in \bigcup\limits_{N\in
	\mathbb{N}}\triangle_N$, where $\text{diam}\,(\omega)$ is the diameter of the cell $\omega$ and $\rho(\omega)$ is the diameter
of the largest $d$-dimensional ball contained in $\omega$. If this condition is not satisfied, then we say that the
partitions in the sequence are ``anisotropic''.
It was shown in~\cite{D11,D12} (and earlier in~\cite{Kochurov95} for $d=2$) that on a wider set of all convex partitions $\mathfrak{D}_N$
significantly better order of approximation can be achieved. More precisely, for $1\leqslant p \leqslant\infty$, 
\[
E_N(f)_p =\mathcal{O}\big(N^{-\frac 2{d+1}}\big),\quad f\in W^2_p(\Omega),
\]
which almost doubles the approximation order in comparison to $N^{-\frac 1d}$ when $d$ is large. This improvement in order is
obtained on a sequence of anisotropic partitions
constructed in two steps by first subdividing $\Omega$ uniformly into smaller sub-cubes and then splitting each of the
sub-cubes with the help of equidistant
hyperplanes orthogonal to the average gradient of $f$ on the sub-cube. Moreover, it is shown in \cite{D12} that $N^{-\frac 2{d+1}}$ is the 
saturation order of piecewise constant approximation on convex partitions in the following sense: 
if $E_N(f)_\infty=\smallo(N^{-\frac 2{d+1}})$ as $N\to\infty$
for a twice continuously differentiable function $f:\Omega\to\mathbb{R}$, then the Hessian of $f$ cannot be either positive definite or negative definite at any point in $\Omega$.
This result has been extended to $E_N(f)_p$ for all $1\leqslant p <\infty$ in~\cite{Kozynenko18}. 

In this paper we are interested in determining Sobolev and Sobolev-Slobodeckij spaces  $W^r_q(\Omega)$ for which the order of $E_N(f)_p$ is $\mathcal{O}\big(N^{-\frac 2{d+1}}\big)$ or 
at least $\smallo(N^{-\frac 1d})$, $N\to\infty$, such that anisotropic partitions provide superior convergence.

Our main result is the estimate (Theorem~\ref{theorem1})
\begin{equation}\label{main}
	E_N(f)_p\leqslant CN^{-\frac 2{d+1}}\|f\|_{W^2_q(\Omega)},\quad f\in W^2_q(\Omega),
\end{equation}
for all $q<\infty$ satisfying
\[
\frac{2}{d+1} + \frac{1}{p} - \frac{1}{q} \geqslant 0,
\]
where $C$ depends only on $d,p,q$.
The optimal order $N^{-\frac 2{d+1}}$ is achieved on a sequence of adaptively refined dyadic subdivisions that subsequently undergo anisotropic refinement 
(see Algorithm~\ref{algorithm1}). 
The proof of \eqref{main} relies on a generalization of Birman-Solomyak's cell counting techniques  \cite[Theorem~2.1]{BS67} to the case when the dyadic cells
may carry varying numbers of degrees of freedom (see Section 2), which seems to be of independent interest. 

Since the spaces $W^2_q(\Omega)$ are embedded into $L_p(\Omega)$ whenever $\frac{2}{d} + \frac{1}{p} - \frac{1}{q} \geqslant 0$ when $p<\infty$ 
or $\frac{2}{d} -\frac{1}{q} > 0$ when $p=\infty$ \cite[Theorem~4.12]{AdaFou}, this leaves open the question on the order of $E_N(f)_p$ as $N\to\infty$
for functions $f\in W^2_q(\Omega)$ when the parameters $1\leqslant p\leqslant \infty$ and $1\leqslant q<\infty$ satisfy
\begin{equation}
	\label{parameters_2}
	\left\{
	\begin{array}{ll}
		\frac{2}{d+1} + \frac{1}{p} < \frac 1q \leqslant \frac{2}{d} + \frac{1}{p}, & \textrm{if }\;p < \infty,\\ [6pt]
		\frac{2}{d+1} + \frac{1}{p} < \frac 1q < \frac{2}{d} + \frac{1}{p}, & \textrm{if }\;p = \infty.
	\end{array}\right.
\end{equation}
We partially fill this gap in Theorem~\ref{theorem2} by showing that under these conditions 
\[
E_N(f)_p=\mathcal{O}\left(N^{-d\left(\frac{2}{d} + \frac{1}{p} - \frac{1}{q}\right)}\right),\quad f\in W_q^2(\Omega).
\]
This proves in particular that
\[
E_N(f)_p=\smallo\left(N^{-\frac 1d}\right),\quad N\to\infty,\quad f\in W_q^2(\Omega),
\]
as soon as 
\[
\frac{2}{d} + \frac{1}{p} -\frac1q >\frac{1}{d^2}.
\]
For the remaining range of parameters $p$ and $q$ the estimate $E_N(f)_p = \mathcal{O}\left(N^{-\frac 1d}\right)$  that follows directly from~\eqref{BSc} remains the best known
for $f\in W_q^2(\Omega)$. 
The above results extend to Sobolev and Sobolev-Slobodeckij spaces $W^r_q(\Omega)$ with $r>2$ by embedding, see Corollary~\ref{cor}.

Finally, we consider the question whether the estimates \eqref{BS} and \eqref{BSc} can be improved for $r<2$. 
Theorem~\ref{lb} shows that for all $1 \leqslant q <\infty$ and $\frac{d}{q}<r<2$,
\[
\sup_{f\in W^r_q(\Omega)\atop \|f\|_{W^r_q(\Omega)}\leqslant 1} E_N(f)_\infty\ne\smallo\left(N^{-\frac{r}{d}}\right),\quad N\to\infty.
\]
(Recall that $r> \frac{d}{q}$ ensures that $W^r_q(\Omega)$ is embedded into $L_\infty(\Omega)$.) This shows in particular that \eqref{BS} cannot be improved for 
$p=\infty$. Another consequence is 
$$
\sup_{f\in W^r_q(\Omega)\atop \|f\|_{W^r_q(\Omega)}\leqslant 1} E_N(f)_\infty\ne\mathcal{O}\big(N^{-\frac 2{d+1}}\big)\quad
\text{if}\quad r < \frac{2d}{d+1}.$$
Hence an estimate of the type $E_N(f)_\infty\leqslant CN^{-\frac 2{d+1}}\|f\|_{W^r_q(\Omega)}$ cannot hold for all $f\in W^r_q(\Omega)$ when $r < \frac{2d}{d+1}$,
leaving only a small gap of $\frac{2d}{d+1}\leqslant r<2$ open for the possibility of extending \eqref{main} to the spaces $W^r_q(\Omega)$ with $r<2$ in the case $p=\infty$.

The paper is organized as follows. We devote Section~\ref{aux} to auxiliary results that generalize Theorem~2.1 in Birman-Solomyak's paper~\cite{BS67}.
The main results about the order of the best nonlinear approximation by piecewise constants are presented in Section~\ref{optorder} (estimates from above) and
Section~\ref{below} (estimates from below).

\section{Cell counting}
\label{aux}

Let $\Omega$ be a cube $(a_1,a_1+h)\times\cdots\times(a_d,a_d+h)$ in $\mathbb{R}^d$, $d\ge1$, with side length $h$.
We follow~\cite{BS67} to define dyadic subdivisions of $\Omega$. Let $\square$ be a partition of $\Omega$ into a finite number of open cubes. A partition
$\square^\prime$ of $\Omega$ is an {\it elementary extension} of $\square$ if it can be obtained from $\square$ by uniformly splitting some of its cubes into 
$2^d$ equal open cubes with halved side length. We
call a partition $\square$ of $\Omega$ a {\it dyadic subdivision} if it is obtained from the singleton partition $\{\Omega\}$ with the help of a finite number of elementary extensions. 

A crucial tool in establishing the main results of \cite{BS67} including the estimate \eqref{BS} is provided by Theorem~2.1 in~\cite{BS67} that gives an upper
bound on the number of cells in a specially constructed dyadic subdivision of $\Omega$. 

Similarly, the main ingredient for obtaining our estimate \eqref{main} 
is an upper bound on the number of cells in a specially constructed convex partition $\triangle$ of $\Omega$. The partition $\triangle$ is built in two steps by first
constructing a dyadic subdivision $\square$ of $\Omega$ and then splitting every cube $\omega\in \square$ anisotropically with the help of parallel hyperplanes
orthogonal to the average gradient of $f$ on $\omega$, where the number of hyperplanes depends on the size of $\omega$ relative to $\Omega$, see Algorithm~\ref{algorithm1}.
In order to obtain this bound we extend the techniques of \cite{BS67} to the case when each dyadic subcube in $\square$ is assigned a different number of degrees of freedom 
depending on its size.

As in \cite{BS67} we say that a non-negative function $\Phi$ of subcubes in $\Omega$ is  {\it subadditive} if 
$\sum\limits_{\omega'\in\square_\omega}\Phi(\omega')\leqslant\Phi(\omega)$ whenever $\square_\omega$ 
is a dyadic subdivision of a subcube $\omega$ of $\Omega$. 
For a cube $\omega\subset\Omega$, we set
\[
g_{\alpha}(\omega):=|\omega|^{\alpha}\Phi(\omega) \quad\text{ and }\quad N_\gamma(\omega):= \Big\lfloor \Big(\frac{|\Omega|}{|\omega|}\Big)^{\!\!\!\gamma\,}\Big\rfloor ,
\quad \alpha,\gamma\geqslant 0,
\]
and, for a dyadic subdivision $\square$ of $\Omega$, let
\[
G_{\alpha}(\square):=\max\limits_{\omega\in\square} g_{\alpha}(\omega).
\]

\begin{lemma}
	\label{lemma1}
	Let $\Omega$ be a cube in $\mathbb{R}^d$ and $\Phi$ be a non-negative subadditive function defined on subcubes in $\Omega$, and let $0\leqslant \gamma\leqslant \alpha$. 
	Assume that a sequence of dyadic subdivisions $\{\square_{k}\}_{k=0}^{\infty}$ of $\Omega$ is obtained recursively as follows: 
	set $\square_{0}:=\{\Omega\}$ and, for $k\in\mathbb{N}$, construct an elementary extension $\square_{k}$ of $\square_{k-1}$ by subdividing all cubes 
	$\omega\in\square_{k-1}$ satisfying
	\begin{equation}
		\label{subdivision}
		g_{\alpha}(\omega) \geqslant 2^{-d\alpha}G_{\alpha}\left(\square_{k-1}\right)
	\end{equation}
	into $2^d$ equal cubes. Then, for $k\in\mathbb{N}$, 
	\begin{equation}
		\label{estimate}
		G_{\alpha}\left(\square_{k}\right)\leqslant C_1(d,\gamma,\alpha)N_{k}^{-\frac{\alpha+1}{\gamma+1}}|\Omega|^\alpha\Phi(\Omega),
	\end{equation}
	where
	\begin{equation}\label{Nk}
		N_{k}:=\sum\limits_{\omega\in\square_k} N_\gamma(\omega),
	\end{equation}
	and the constant $C_1(d,\gamma,\alpha)$ depends only on $d$, $\gamma$ and $\alpha$.
\end{lemma}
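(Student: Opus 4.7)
The plan is to generalize the level-counting technique behind \cite[Theorem~2.1]{BS67} so as to accommodate the level-dependent weights $N_\gamma(\omega)$. The first step is to exploit the greedy rule \eqref{subdivision} together with the subadditivity of $\Phi$: this forces $G_\alpha$ to be non-increasing in $k$, because any surviving cube has $g_\alpha(\omega)<2^{-d\alpha}G_\alpha(\square_{k-1})$ by the failure of \eqref{subdivision}, while a freshly created child $\omega$ of a split parent $\omega'$ satisfies $g_\alpha(\omega)=|\omega|^\alpha\Phi(\omega)\leqslant 2^{-d\alpha}|\omega'|^\alpha\Phi(\omega')=2^{-d\alpha}g_\alpha(\omega')$, using $\Phi(\omega)\leqslant\Phi(\omega')$ from subadditivity on the $2^d$ children of $\omega'$. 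Setting $t:=G_\alpha(\square_k)$ and writing $\Lambda_k$ for the collection of all cubes subdivided at some step $\tau\leqslant k$, this propagates backwards: every $\omega\in\Lambda_k$ satisfies $g_\alpha(\omega)\geqslant 2^{-d\alpha}G_\alpha(\square_{\tau-1})\geqslant 2^{-d\alpha}t$.

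Next, stratifying $\Lambda_k$ by dyadic level and writing $\Lambda_k^{(j)}$ for its level-$j$ slice, I would observe that the cubes in $\Lambda_k^{(j)}$ are pairwise disjoint subcubes of $\Omega$ on each of which $\Phi(\omega)\geqslant 2^{d\alpha(j-1)}t/|\Omega|^\alpha$. Hence subadditivity yields $|\Lambda_k^{(j)}|\leqslant A\cdot 2^{-dj\alpha}$ with $A:=2^{d\alpha}|\Omega|^\alpha\Phi(\Omega)/t$, while trivially $|\Lambda_k^{(j)}|\leqslant 2^{dj}$. Since every level-$j$ leaf of $\square_k$ (with $j\geqslant 1$) is one of the $2^d$ children of some $\omega'\in\Lambda_k^{(j-1)}$, I obtain
\[
N_k\;\leqslant\;2^{d(1+\gamma)}\sum_{i\geqslant 0}|\Lambda_k^{(i)}|\cdot 2^{di\gamma}.
\]
When $\gamma<\alpha$, splitting this sum at the crossover level $i^*\approx\log_2 A/(d(\alpha+1))$, where the two bounds on $|\Lambda_k^{(i)}|$ coincide, and summing each half as a geometric series yields $N_k\leqslant C(d,\gamma,\alpha)\,A^{(1+\gamma)/(1+\alpha)}$. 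Substituting the definition of $A$ and solving for $t$ then produces \eqref{estimate}.

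The main obstacle is the endpoint case $\gamma=\alpha$: the tail summands $A\cdot 2^{di(\gamma-\alpha)}=A$ are then constant in $i$, and the naive geometric argument picks up a spurious $\log A$ factor. To eliminate this, I plan to replace level-by-level bookkeeping with an overcount along ancestor chains in the rooted dyadic subtree $\Lambda_k$. Its leaves $\Lambda_k^{\max}\subset\Lambda_k$ (the maximal subdivided cubes, i.e., those with no subdivided descendant) are pairwise disjoint subcubes of $\Omega$, so subadditivity applied directly to $\Lambda_k^{\max}$ gives $\sum_{\omega\in\Lambda_k^{\max}}(|\Omega|/|\omega|)^\alpha\leqslant A$. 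Every $\omega'\in\Lambda_k$ is an ancestor of some $\omega\in\Lambda_k^{\max}$, and for $\omega\in\Lambda_k^{\max}$ at level $j$ the geometric series $\sum_{i=0}^{j}2^{di\alpha}\leqslant C(d,\alpha)\,2^{dj\alpha}$ absorbs the contributions of all ancestors of $\omega$; summing over $\Lambda_k^{\max}$ then yields $\sum_{\omega'\in\Lambda_k}(|\Omega|/|\omega'|)^\alpha\leqslant C(d,\alpha)\,A$ and hence $N_k\leqslant C'(d,\alpha)\,A=C'(d,\alpha)\,A^{(1+\alpha)/(1+\alpha)}$, which is precisely \eqref{estimate} in the borderline case.
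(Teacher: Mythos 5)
Your argument is correct, but it follows a genuinely different route from the paper. The paper's proof is dynamic in the refinement index: it bounds the weighted count $t_k$ of cubes split at step $k$ by combining the threshold rule \eqref{subdivision} with H\"older's inequality (exponents $\frac{\alpha+1}{\alpha-\gamma}$ and $\frac{\alpha+1}{\gamma+1}$) and subadditivity, shows $N_j-N_{j-1}\leqslant C\,t_j$, and then telescopes, summing a geometric series in $j$ via the decay $G_\alpha(\square_j)\leqslant 2^{-d\alpha}G_\alpha(\square_{j-1})$; notably this single computation covers the endpoint $\gamma=\alpha$ without any special treatment, since the H\"older step just degenerates to $\sum_{\omega\in S_k}\Phi(\omega)\leqslant\Phi(\Omega)$. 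Your proof is instead a static, level-stratified count of all ever-subdivided cubes: the two bounds $|\Lambda_k^{(i)}|\leqslant\min\{2^{di},A2^{-di\alpha}\}$ (the second from the threshold plus subadditivity across a same-level disjoint family), the relation $N_k\leqslant 2^{d(1+\gamma)}\sum_i|\Lambda_k^{(i)}|2^{di\gamma}$, and a crossover-level split, with a separate ancestor-chain/maximal-cube argument needed at $\gamma=\alpha$ to avoid the logarithmic loss. What your route buys is a more transparent combinatorial picture that dispenses with the step-by-step recursion (you only need monotonicity of $G_\alpha$, not the full geometric decay across steps); what the paper's route buys is uniformity in $0\leqslant\gamma\leqslant\alpha$ and an explicit constant in one pass. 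One small remark applying equally to both proofs: your claim $\Phi(\omega)\leqslant\Phi(\omega')$ for a child $\omega$ of $\omega'$ (and likewise your completion of same-level or leaf families to full dyadic partitions) uses subadditivity applied locally, i.e.\ to subfamilies of dyadic partitions of $\Omega$ together with nonnegativity of $\Phi$; the paper's inequality $G_\alpha(\square_j)\leqslant2^{-d\alpha}G_\alpha(\square_{j-1})$ rests on exactly the same reading, so you are on the same footing, but it is worth stating explicitly. Also note that, as in the paper, your constants require $\alpha>0$ (the case $\alpha=\gamma=0$ is degenerate for both arguments).
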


\begin{remark}\rm
	Lemma~\ref{lemma1} in the case $\gamma = 0$ coincides with Theorem~2.1 in~\cite{BS67}.
\end{remark}

\begin{proof}
	Let $k\in\mathbb{N}$. Denote by $S_{k}$ the set of all cubes from $\square_{k-1}$ that are subdivided to obtain  $\square_k$, and set $t_k :=
	\sum\limits_{\omega\in S_k} N_\gamma(\omega)$. By the definition of $N_\gamma(\omega)$, we have a trivial estimate
	\[
	t_{k} \leqslant |\Omega|^\gamma\sum\limits_{\omega\in S_{k}}\left|\omega \right|^{-\gamma}.
	\]
	By the construction of $\square_k$ it is also clear that
	\[
	2^{-d\alpha}G_{\alpha}\left(\square_{k-1}\right)\leqslant\min\limits_{\omega\in S_{k}} g_\alpha(\omega).
	\]
	Combining the above inequalities and applying the definition of $g_\alpha$ we obtain
	\begin{align*}
		\left(2^{-d\alpha}G_{\alpha}(\square_{k-1})\right)^{\frac{\gamma+1}{\alpha+1}} & \leqslant \displaystyle \min\limits_{\omega\in S_{k}}(g_\alpha(\omega))^{\frac{\gamma+1}{\alpha+1}} = \min\limits_{\omega\in S_{k}}|\omega|^{\frac{\alpha(\gamma+1)}{\alpha+1}}\Phi^{\frac{\gamma+1}{\alpha+1}}(\omega) \\
		&\leqslant \Big(\sum\limits_{\omega\in S_{k}}N_\gamma(\omega)\Big)^{-1} \sum\limits_{\omega\in S_{k}} N_\gamma(\omega) |\omega|^{\frac{\alpha(\gamma+1)}{\alpha+1}}\Phi^{\frac{\gamma+1}{\alpha+1}}(\omega) \\
		&\leqslant  \displaystyle\frac{|\Omega|^\gamma}{t_{k}}\sum\limits_{\omega\in S_{k}}|\omega|^{\frac{\alpha-\gamma}{\alpha+1}} \Phi^{\frac{\gamma+1}{\alpha+1}}(\omega).
	\end{align*}
	Applying the H\"older inequality with parameters $\frac{\alpha+1}{\alpha-\gamma}$ and $\frac{\alpha+1}{\gamma+1}$, and using subadditivity of $\Phi$, we have
	\[
	\left(2^{-d\alpha}G_{\alpha}(\square_{k-1})\right)^{\frac{\gamma+1}{\alpha+1}} \leqslant \frac{|\Omega|^\gamma}{t_{k}} \Big(\sum\limits_{\omega\in S_{k}}|\omega|\Big)^{\frac{\alpha-\gamma}{\alpha+1}}\Big(\sum\limits_{\omega\in S_{k}}\Phi(\omega)\Big)^{\frac{\gamma+1}{\alpha+1}} \leqslant \frac{1}{t_{k}} |\Omega|^{\alpha\frac{\gamma+1}{\alpha+1}} \Phi^{\frac{\gamma+1}{\alpha+1}}(\Omega).
	\]
	The latter inequality can be rewritten as
	\begin{equation}
		\label{ineq for t}
		t_{k} \leqslant 2^{\frac{\alpha d (\gamma+1)}{\alpha+1}} \left(G_{\alpha}(\square_{k-1})\right)^{-\frac{\gamma+1}{\alpha+1}} |\Omega|^{\alpha\frac{\gamma+1}{\alpha+1}} \Phi^{\frac{\gamma+1}{\alpha+1}}(\Omega).
	\end{equation}
	Note that to obtain~\eqref{ineq for t} in the case $\alpha = \gamma$, instead of the H\"older inequality we only need to use non-negativity and subadditivity of function $\Phi$: $\sum\limits_{\omega\in S_k}\Phi(\omega) \leqslant \sum\limits_{\omega\in \square_{k-1}}\Phi(\omega) \leqslant \Phi(\Omega)$.
	
	Next, by the construction of dyadic subdivisions $\square_j$, $j\in\mathbb{N}$, we have
	\begin{equation}
		\label{ineq for G}
		\begin{array}{rcl}
			G_\alpha(\square_{j}) & \leqslant & \displaystyle \max\left\{2^{-d\alpha}G_\alpha(\square_{j-1});\max\limits_{\omega'\in\square_{j}\setminus\square_{j-1}} |\omega'|^{\alpha}\Phi(\omega')\right\}\\ 
			& \leqslant & \displaystyle\max\left\{2^{-d\alpha}G_\alpha(\square_{j-1});\max\limits_{\omega\in S_{j}} 2^{-d\alpha}|\omega|^{\alpha}\Phi(\omega)\right\} \leqslant 2^{-d\alpha} G_\alpha (\square_{j-1}).
		\end{array}
	\end{equation}
	Applying~\eqref{ineq for G} recursively we obtain that for every $k,j\in\mathbb{N}$, $j\leqslant k$,
	\begin{equation}
		\label{recurs ineq for G}
		G_\alpha (\square_{k-1}) \leqslant 2^{-d\alpha(k-j)} G_\alpha(\square_{j-1}).
	\end{equation}
	In addition, for every $j\in\mathbb{N}$, we have
	\begin{align*}
		N_j - N_{j-1} & =   \sum_{\omega\in S_{j}} \sum_{\tilde\omega\in\square_{j}:\, \tilde\omega\subset\omega} \left(N_\gamma\left(\tilde\omega\right)-N_\gamma\left(\omega\right)\right) \\
		& = \sum_{\omega\in S_{j}} \sum_{\tilde\omega\in \square_{j}:\, \tilde\omega\subset\omega} \Big(\Big\lfloor \Big(\frac{|\Omega|}{|\tilde\omega|}\Big)^{\!\!\!\gamma\,}\Big\rfloor  -\Big\lfloor \Big(\frac{|\Omega|}{|\omega|}\Big)^{\!\!\!\gamma\,}\Big\rfloor \Big)\\
		& =  \sum_{\omega\in S_{j}} \Big(2^d\Big\lfloor 2^{d\gamma}\Big(\frac{|\Omega|}{|\omega|}\Big)^{\!\!\!\gamma\,}\Big\rfloor  -\Big\lfloor \Big(\frac{|\Omega|}{|\omega|}\Big)^{\!\!\!\gamma\,}\Big\rfloor \Big)\\
		& \leqslant  \left(2^{d(\gamma+2)} -1\right)\sum_{\omega\in S_{j}}N_\gamma(\omega) = \left(2^{d(\gamma+2)} -1\right)t_j.
	\end{align*}
	Summing up the above inequalities for $j=1,\ldots,k$ and taking into account that $t_1 = N_0 = 1$, we obtain
	\[
	N_k - N_0 \leqslant \left(2^{d(\gamma+2)} -1\right) \sum_{j=1}^{k} t_j \leqslant 2^{d(\gamma+2)}\sum\limits_{j=1}^k t_j - N_0.
	\]
	Combining~\eqref{ineq for t},~\eqref{recurs ineq for G} and~\eqref{ineq for G} with the latter inequality, we conclude that
	\begin{align*}
		\displaystyle N_k & \leqslant  \displaystyle 2^{d(\gamma+2)}\sum_{j=1}^{k}t_j \leqslant 2^{d(\gamma+2)}\sum_{j=1}^{k} 2^{\frac{\alpha d(\gamma+1)}{\alpha+1}} \left(G_{\alpha}(\square_{j-1})\right)^{-\frac{\gamma+1}{\alpha+1}} |\Omega|^{\alpha\frac{\gamma+1}{\alpha+1}} \Phi^{\frac{\gamma+1}{\alpha+1}}(\Omega) \\
		& =  \displaystyle 2^{d(\gamma+2) + \frac{\alpha d (\gamma+1)}{\alpha+1}} |\Omega|^{\alpha\frac{\gamma+1}{\alpha+1}} \Phi^{\frac{\gamma+1}{\alpha+1}}(\Omega) \sum_{j=1}^{k} \left(G_{\alpha}(\square_{j-1})\right)^{-\frac{\gamma+1}{\alpha+1}} \\
		& \leqslant  \displaystyle 2^{d(\gamma+2) + \frac{\alpha d (\gamma+1)}{\alpha+1}} |\Omega|^{\alpha\frac{\gamma+1}{\alpha+1}} \Phi^{\frac{\gamma+1}{\alpha+1}}(\Omega) \left(G_{\alpha}(\square_{k-1})\right)^{-\frac{\gamma+1}{\alpha+1}} \sum_{j=1}^{k} 2^{-\frac{d\alpha (\gamma+1)}{\alpha+1}(k-j)} \\
		& \leqslant  \displaystyle 2^{d(\gamma+2)}\cdot\left(1-2^{-\frac{\alpha d (\gamma+1)}{\alpha+1}}\right)^{-1}   |\Omega|^{\alpha\frac{\gamma+1}{\alpha+1}} \Phi^{\frac{\gamma+1}{\alpha+1}}(\Omega) \left(G_{\alpha}(\square_{k})\right)^{-\frac{\gamma+1}{\alpha+1}},
	\end{align*}
	which finishes the proof of the lemma with 
	\[
	C_1(d,\gamma,\alpha) = \displaystyle 2^{\frac{d (\alpha+1)(\gamma+2)}{\gamma+1}}\left(1-2^{-\frac{\alpha d
			(\gamma+1)}{\alpha+1}}\right)^{-\frac{\alpha+1}{\gamma+1}}.\qed
	\]
\end{proof}

In addition, we will need a similar statement in the case when $0 < \alpha < \gamma$ and $\Phi^{\gamma/\alpha}$ rather than $\Phi$ is subadditive.

\begin{lemma}
	\label{lemma2}
	Let $\Omega$ be a cube in $\mathbb{R}^d$, let $0 < \alpha < \gamma$, and let $\Phi$ be a non-negative function defined on subcubes in $\Omega$ such that
	$\Phi^{\gamma/\alpha}$ is subadditive. Assume that a sequence of dyadic subdivisions $\{\square_{k}\}_{k=0}^{\infty}$ of $\Omega$ is constructed as in Lemma~\ref{lemma1}.
	Then, for $k\in\mathbb{N}$, 
	\[
	G_{\alpha}(\square_{k})\leqslant C_2(d,\gamma,\alpha) N_{k}^{-\frac{\alpha}{\gamma}} |\Omega|^{\alpha} \Phi(\Omega),
	\]
	where $N_k$ is given by \eqref{Nk},
	and the constant $C_2(d,\gamma,\alpha)$
	depends only on $d$, $\gamma$ and $\alpha$.
\end{lemma}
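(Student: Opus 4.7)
The plan is to adapt the argument of Lemma~\ref{lemma1} almost verbatim, replacing the H\"older step (which required $\alpha\geqslant\gamma$) by a direct use of the subadditivity of $\Phi^{\gamma/\alpha}$. As before, let $S_k$ denote the cubes of $\square_{k-1}$ that are split to form $\square_k$, and put $t_k:=\sum_{\omega\in S_k} N_\gamma(\omega)$. The subdivision criterion still gives
\[
|\omega|^\alpha\Phi(\omega)=g_\alpha(\omega)\geqslant 2^{-d\alpha}G_\alpha(\square_{k-1}),\qquad \omega\in S_k,
\]
so that, raising to the power $\gamma/\alpha>1$, we obtain
\[
\Phi(\omega)^{\gamma/\alpha}\geqslant \bigl(2^{-d\alpha}G_\alpha(\square_{k-1})\bigr)^{\gamma/\alpha}|\omega|^{-\gamma}.
\]

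First I would sum this inequality over $\omega\in S_k$ and exploit the assumed subadditivity of $\Phi^{\gamma/\alpha}$ to get
\[
\sum_{\omega\in S_k}|\omega|^{-\gamma}\leqslant \bigl(2^{-d\alpha}G_\alpha(\square_{k-1})\bigr)^{-\gamma/\alpha}\Phi(\Omega)^{\gamma/\alpha}.
\]
Combined with the trivial bound $t_k\leqslant |\Omega|^\gamma\sum_{\omega\in S_k}|\omega|^{-\gamma}$ from the proof of Lemma~\ref{lemma1}, this yields the analogue of \eqref{ineq for t}, namely
\[
t_k\leqslant 2^{d\gamma}\,|\Omega|^\gamma\,\Phi(\Omega)^{\gamma/\alpha}\,\bigl(G_\alpha(\square_{k-1})\bigr)^{-\gamma/\alpha}.
\]

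Next I would reuse, without change, the increment bound $N_k-N_{k-1}\leqslant (2^{d(\gamma+2)}-1)\,t_k$ established in the proof of Lemma~\ref{lemma1} (its derivation only uses the definition of $N_\gamma$ and the dyadic splitting, not subadditivity), together with the monotone decay $G_\alpha(\square_j)\leqslant 2^{-d\alpha}G_\alpha(\square_{j-1})$ from \eqref{ineq for G}. Telescoping gives $G_\alpha(\square_{j-1})\geqslant 2^{d\alpha(k-j)}G_\alpha(\square_{k-1})$ for $j\leqslant k$, so summing the estimates for $t_j$ and using the geometric series $\sum_{j=1}^k 2^{-d\gamma(k-j)}\leqslant (1-2^{-d\gamma})^{-1}$ produces
\[
N_k\leqslant C(d,\gamma,\alpha)\,|\Omega|^\gamma\,\Phi(\Omega)^{\gamma/\alpha}\,\bigl(G_\alpha(\square_{k-1})\bigr)^{-\gamma/\alpha}.
\]
Finally I would replace $G_\alpha(\square_{k-1})$ by $G_\alpha(\square_k)$ using \eqref{ineq for G} one more time (at the cost of an absorbed constant $2^{d\gamma}$) and solve for $G_\alpha(\square_k)$, which delivers the claimed estimate with a constant $C_2(d,\gamma,\alpha)$ of the form $2^{c(d,\gamma,\alpha)}(1-2^{-d\gamma})^{-\alpha/\gamma}$.

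I do not foresee a genuine obstacle: the only substantive difference from Lemma~\ref{lemma1} is that the $\gamma/\alpha$-th power of $\Phi$ is now the subadditive object, which forces us to take powers at a different stage of the argument and eliminates the need for H\"older's inequality entirely. Care has to be taken that $\gamma/\alpha>1$ is genuinely what makes the pointwise estimate $\Phi(\omega)^{\gamma/\alpha}\geqslant c\,|\omega|^{-\gamma}$ compatible with the subadditive control of the sum, but once this is noted the rest of the bookkeeping is the same as in the previous lemma.
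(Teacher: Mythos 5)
Your proof is correct, and it diverges from the paper's argument precisely at the estimate for $t_k$. The paper bounds $\min_{\omega\in S_k}g_\alpha(\omega)$ by the $N_\gamma$-weighted average $\frac{|\Omega|^\gamma}{t_k}\sum_{\omega\in S_k}|\omega|^{\alpha-\gamma}\Phi(\omega)$ and then applies H\"older's inequality with exponents $\frac{\gamma}{\gamma-\alpha}$ and $\frac{\gamma}{\alpha}$ together with the subadditivity of $\Phi^{\gamma/\alpha}$; to convert the resulting factor $\big(\sum_{\omega\in S_k}|\omega|^{-\gamma}\big)^{1-\alpha/\gamma}$ back into a power of $t_k$ it also needs the lower bound $t_k\geqslant(1-2^{-d\gamma})|\Omega|^\gamma\sum_{\omega\in S_k}|\omega|^{-\gamma}$, which is where the case distinction $k=1$ versus $k\geqslant 2$ and the factor $(1-2^{-d\gamma})^{\gamma/\alpha-1}$ in \eqref{ineq for t 2} come from. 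You instead raise the selection criterion $|\omega|^\alpha\Phi(\omega)\geqslant 2^{-d\alpha}G_\alpha(\square_{k-1})$ pointwise to the power $\gamma/\alpha>1$ and sum, using subadditivity directly to get $\sum_{\omega\in S_k}|\omega|^{-\gamma}\leqslant 2^{d\gamma}\left(G_\alpha(\square_{k-1})\right)^{-\gamma/\alpha}\Phi^{\gamma/\alpha}(\Omega)$, which with the trivial upper bound for $t_k$ yields the analogue of \eqref{ineq for t 2} with no H\"older step, no lower bound on $t_k$, and a slightly cleaner constant; your one-sided use of subadditivity over $S_k\subset\square_{k-1}$ is legitimate because $\Phi\geqslant 0$. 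From there the two proofs coincide: the increment bound $N_j-N_{j-1}\leqslant(2^{d(\gamma+2)}-1)t_j$ and the decay \eqref{ineq for G} are indeed independent of subadditivity, the geometric series has ratio $2^{-d\gamma}$ as you say, and solving for $G_\alpha(\square_k)$ gives the claimed bound (note that passing from $G_\alpha(\square_{k-1})$ to $G_\alpha(\square_k)$ costs nothing, since $G_\alpha(\square_k)\leqslant G_\alpha(\square_{k-1})$ gives $\left(G_\alpha(\square_{k-1})\right)^{-\gamma/\alpha}\leqslant\left(G_\alpha(\square_k)\right)^{-\gamma/\alpha}$; your extra factor $2^{d\gamma}$ is harmless but unnecessary). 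One correction of framing: the paper's own proof of this lemma does not dispense with H\"older --- it is your argument that does --- so what you gain over the paper is a shorter derivation of the $t_k$ bound and a marginally better constant, at no loss of generality.
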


\begin{proof}
	Using the same notations $S_{k}$ and $t_k$ as in the proof of Lemma~\ref{lemma1}, we have 
	\[
	\left(1 - 2^{-d\gamma}\right)|\Omega|^\gamma\sum\limits_{\omega\in S_{k}}\left|\omega \right|^{-\gamma}
	\leqslant t_{k} \leqslant |\Omega|^\gamma\sum\limits_{\omega\in S_{k}}\left|\omega \right|^{-\gamma},\qquad k\in\mathbb{N},
	\]
	which is trivial when $k=1$ and in the case $k\geqslant 2$ follows from the fact that $|\Omega|\geqslant 2^{d}|\omega|$ for all $\omega\in S_k$. 
	Due to the construction of $\square_k$ it is clear that
	\[
	2^{-d\alpha}G_{\alpha}(\square_{k-1})\leqslant\min\limits_{\omega\in S_{k}} g_\alpha(\omega).
	\]
	Combining the above inequalities and applying the definition of $g_\alpha$ we infer
	\[
	2^{-d\alpha}G_{\alpha}(\square_{k-1}) \leqslant  \min\limits_{\omega\in S_{k}}g_\alpha(\omega) = \min\limits_{\omega\in S_{k}}|\omega|^{\alpha}\Phi(\omega) \leqslant \frac{|\Omega|^\gamma}{t_{k}}\sum\limits_{\omega\in S_{k}}|\omega|^{\alpha-\gamma}\Phi(\omega).
	\]
	Applying the H\"older inequality with parameters $\frac{\gamma}{\gamma-\alpha}$ and $\frac{\gamma}{\alpha}$, and using the subadditivity of $\Phi^{\frac{\gamma}{\alpha}}$ we obtain:
	\begin{align*}
		2^{-d\alpha}G_{\alpha}(\square_{k-1}) & \leqslant  \displaystyle\frac{|\Omega|^\gamma}{t_{k}} \Big(\sum\limits_{\omega\in S_{k}}|\omega|^{-\gamma}\Big)^{1-\frac{\alpha}{\gamma}} \Big(\sum\limits_{\omega\in S_{k}} \Phi^{\frac{\gamma}{\alpha}}(\omega)\Big)^{\frac{\alpha}{\gamma}} \\
		& \leqslant  \displaystyle\frac{|\Omega|^\alpha}{\left(1 - 2^{-d\gamma}\right)^{1 - \frac{\alpha}{\gamma}}t_{k}}\cdot  t_k^{1-\frac{\alpha}{\gamma}}\Phi(\Omega) = \frac{|\Omega|^\alpha\Phi(\Omega)}{\left(1 - 2^{-d\gamma}\right)^{1 - \frac{\alpha}{\gamma}}}\cdot t_{k}^{-\frac{\alpha}{\gamma}}.
	\end{align*}
	As a result, we have
	\begin{equation}
		\label{ineq for t 2}
		t_k \leqslant \frac{2^{d\gamma}|\Omega|^\gamma}{\left(1 - 2^{-d\gamma}\right)^{\frac{\gamma}{\alpha} - 1}} \left(G_\alpha(\square_{k-1})\right)^{-\frac{\gamma}{\alpha}} \Phi^{\frac{\gamma}{\alpha}} (\Omega).
	\end{equation}
	Repeating the arguments presented in the proof of Lemma~\ref{lemma1} with inequality~\eqref{ineq for t 2} being applied instead of~\eqref{ineq for t}, we obtain:
	\[
	N_k \leqslant 2^{2d(\gamma+1)}\cdot\left(1-2^{-d\gamma}\right)^{-\frac{\gamma}{\alpha}} |\Omega|^{\gamma} \Phi^{\frac{\gamma}{\alpha}}(\Omega) \left(G_{\alpha}(\square_{k-1})\right)^{-\frac{\gamma}{\alpha}}.
	\]
	Observing that inequality~\eqref{ineq for G} also holds true in the considered case and applying it to the latter inequality, we complete the proof of the lemma, with
	\[
	C_2(d,\gamma,\alpha) = 2^{\frac{d\alpha(\gamma+2)}{\gamma}}\left(1-2^{-d\gamma}\right)^{-1}.\qed
	\]
\end{proof}

\section{Upper estimates for $E_N(f)_p$}
\label{optorder}

In this section we establish the upper estimates of the error of the best nonlinear $L_p$-approximation of functions from Sobolev space $W_q^2(\Omega)$ by piecewise constants. 

We start with presenting an algorithm for constructing  a convex partition of $\Omega$ and corresponding
piecewise constant approximation for any function $f\in W_q^2(\Omega)$ whenever the conditions of embedding  into
$L_p(\Omega)$ are satisfied. To this end we will use the dyadic refinements defined in Lemma~\ref{lemma1} for $\gamma = \frac 1d$ and a specific
$\alpha$ depending on $d,p,q$. In this case each $N_\gamma(\omega) = \left(|\Omega|/|\omega|\right)^{1/d} $ is a non-negative
integer power of two,  
$N_{k+1} \leqslant 2^d N_{k}$ and the sequence $\left\{N_k\right\}_{k=0}^\infty$ is increasing.

\begin{algorithm}
	\label{algorithm1}
	Let $1 \leqslant p \leqslant \infty$ and $1 \leqslant q < \infty$ be such that 
	$\frac{2}{d} + \frac{1}{p} - \frac{1}{q} \geqslant 0$ when $p < \infty$ and 
	$\frac{2}{d} - \frac{1}{q} > 0$ when $p = \infty$, and let $f\in W_q^2(\Omega)$. 
	Given any $N\in\mathbb{N}$,  construct a sequence of dyadic subdivisions $\left\{\square_{k}\right\}_{k=0}^{m+1}$
	of $\Omega$ following the algorithm in Lemma~\ref{lemma1} with parameters: $\gamma = \frac 1d$, $\Phi(\cdot) := |f|^q_{W^1_q(\cdot)} + |f|^q_{W^2_q(\cdot)}$ and
	\begin{equation}\label{alpha}
		\alpha := q\left(\frac{2}{d}+\frac{1}{p}\left(1+\frac{1}{d}\right)-\frac{1}{q}\right), 
	\end{equation} 
	where $m$ is such that $N_{m}\leqslant N < N_{m+1}$. The convex partition $\triangle_N$ is obtained by subdividing each
	$\omega\in\square_m$ into $N_{\gamma}(\omega)$ slices by equidistant hyperplanes orthogonal to the average gradient ${\bf h}_\omega:=|\omega|^{-1}\int_{\omega}\nabla f({\bf
		x})\, {\rm d}{\bf x}$, and  the piecewise constant approximant $s_N(f)$ of $f$ is defined by
	\[
	s_N(f):= \sum_{\delta\in\triangle_N} f_\delta \cdot\chi_\delta,
	\]
	where 
	\[
	f_\delta:=\frac{1}{|\delta|} \int_\delta f({\bf x})\,{\rm d}{\bf x}, \qquad \chi_\delta({\bf x}) = \left\{\begin{array}{ll} 1, & {\bf x}\in\delta, \\ 0, & {\bf x}\in\Omega\setminus\delta. \end{array}\right.
	\]
\end{algorithm}

\medskip

A key tool for  estimating the error $\left\|f-s_N(f)\right\|_{L_p(\Omega)}$ is provided by Lemmas~\ref{lemma1} and~\ref{lemma2}. 
Observe that Lemma~\ref{lemma1} is applicable when $\alpha\geqslant\frac 1d$, which is equivalent to the inequality $\frac{2}{d+1} + \frac{1}{p} - \frac{1}{q}
\geqslant 0$. This leads to the optimal order estimate in Theorem~\ref{theorem1}. In the case $\alpha<\frac 1d$ we apply Lemma~\ref{lemma2} and obtain 
in Theorem~\ref{theorem2} certain suboptimal order
estimates that are nevertheless of interest whenever they improve the order $\mathcal{O}\big(N^{-\frac1d}\big)$ of isotropic piecewise constant approximation
\eqref{BSc}.

We will need the Sobolev-Poincar\'e inequality in the following form.
Let $\omega$ be a cube in $\mathbb{R}^d$, and let $1\leqslant\xi\leqslant\infty$ and $1\leqslant\eta<\infty$ be such that
either $\frac{1}{d} + \frac{1}{\xi} - \frac{1}{\eta} \geqslant 0$ when $\xi<\infty$, or 
$\frac{1}{d} - \frac{1}{\eta} > 0$ when $\xi=\infty$.
Then for every $g\in W^1_\eta\left(\omega\right)$ with zero mean value over $\omega$, there exists a constant $C_\mathrm{SP}(d,\xi,\eta) > 0$ such that
\begin{equation}\label{SP}
	\|g\|_{L_\xi\left(\omega\right)}\leqslant C_\mathrm{SP}(d,\xi,\eta)\,|\omega|^{\frac{1}{d}+\frac{1}{\xi}-\frac{1}{\eta}}\left\|\nabla g\right\|_{L_\eta\left(\omega\right)},
\end{equation}
where, for  any vector-function $f = \left(f_1,\ldots,f_d\right):\omega\to\mathbb{R}^d$ with components $f_j\in L_\xi(\omega)$, $j=1,\ldots,d$, we set
\[
\|f\|_{L_\xi(\omega)} := \big\|(f_1^2 + \ldots + f_d^2)^{1/2}\big\|_{L_\xi(\omega)},\quad 1\leqslant\xi\leqslant\infty.
\]
A proof of \eqref{SP} for the case of the unit cube can be found e.g.\ in \cite[Theorem 8.12]{LiebLoss_01}, and the general case follows by using an affine mapping between $[0,1]^d$
and $\omega$.

\begin{theorem}
	\label{theorem1}
	Let $d\geqslant 2$, $N\in\mathbb{N}$, and let $1 \leqslant p\leqslant \infty$ and $1 \leqslant q< \infty$ be such that
	\begin{equation}
		\label{qcond}
		\frac{2}{d+1} + \frac{1}{p} - \frac{1}{q} \geqslant 0.
	\end{equation}
	Then for any $f\in W^2_q(\Omega)$ the error of the piecewise constant approximant 
	$s_N(f)$ generated by Algorithm~\ref{algorithm1} satisfies 
	\[
	E_N(f)_p\leqslant \left\|f-s_N(f)\right\|_{L_p(\Omega)}\leqslant C_3(d,p,q,|\Omega|)\, N^{-\frac{2}{d+1}} \left( |f|^q_{W^1_q(\Omega)} + |f|^q_{W^2_q(\Omega)} \right)^{\frac{1}{q}},
	\]
	where the constant $C_3(d,p,q,|\Omega|)$ depends only on $d,p,q$ and $|\Omega|$.
\end{theorem}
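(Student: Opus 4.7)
The plan is to estimate $\|f - s_N(f)\|_{L_p(\Omega)}$ cell-by-cell over the dyadic subdivision $\square_m$ and feed the result into Lemma~\ref{lemma1}. The first step is to observe that $\Phi(\omega) = |f|^q_{W^1_q(\omega)} + |f|^q_{W^2_q(\omega)}$ is additive (hence subadditive) and that the hypothesis $\frac{2}{d+1} + \frac{1}{p} - \frac{1}{q} \geqslant 0$ is algebraically equivalent to $\alpha \geqslant \gamma = \frac{1}{d}$ for the $\alpha$ of \eqref{alpha}. Thus Lemma~\ref{lemma1} applies and yields
\[
G_\alpha(\square_m) \leqslant C_1\, N_m^{-(\alpha+1)/(\gamma+1)}\, |\Omega|^\alpha\, \Phi(\Omega).
\]

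The second step is to bound, for each $\omega \in \square_m$, the contribution of its $N_\gamma(\omega)$ anisotropic slices. I would compare $f$ to its affine ``best fit'': setting $P_\omega(\mathbf{x}) = \bar f_\omega + \mathbf{h}_\omega\cdot(\mathbf{x}-\bar{\mathbf{x}}_\omega)$ and $R_\omega = f - P_\omega$, both $R_\omega$ and the components of $\nabla R_\omega = \nabla f - \mathbf{h}_\omega$ have zero mean on $\omega$. For each slice $\delta$ I split $f - f_\delta = (P_\omega - \bar P_{\omega,\delta}) + (R_\omega - \bar R_{\omega,\delta})$. The linear part equals $\mathbf{h}_\omega\cdot(\mathbf{x}-\bar{\mathbf{x}}_\delta)$, controlled by $|\mathbf{h}_\omega|$ times the width of $\delta$ along $\mathbf{h}_\omega$, which is $O(|\omega|^{1/d}/N_\gamma(\omega))$; combined with the H\"older bound $|\mathbf{h}_\omega| \leqslant C|\omega|^{-1/q}|f|_{W^1_q(\omega)}$ and summation over slices this contributes $C\,|\omega|^{1+p/d-p/q} N_\gamma(\omega)^{-p} |f|^p_{W^1_q(\omega)}$ to $\|f-s_N(f)\|^p_{L_p(\omega)}$. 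The remainder satisfies $\|R_\omega-\bar R_{\omega,\delta}\|_{L_p(\delta)} \leqslant 2\|R_\omega\|_{L_p(\delta)}$ and hence sums to at most $2^p\|R_\omega\|^p_{L_p(\omega)}$, which I would bound by applying the Sobolev-Poincar\'e inequality \eqref{SP} twice---first to the zero-mean $\nabla R_\omega$ in some intermediate $L_{q'}$, then to the zero-mean $R_\omega$---obtaining $\|R_\omega\|_{L_p(\omega)} \leqslant C|\omega|^{2/d+1/p-1/q}|f|_{W^2_q(\omega)}$. A valid $q'$, with $\max\{0,\tfrac{1}{q}-\tfrac{1}{d}\} \leqslant \tfrac{1}{q'} \leqslant \tfrac{1}{d}+\tfrac{1}{p}$, exists precisely when $W^2_q \hookrightarrow L_p$, which is implied by the hypothesis.

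The crucial algebraic coincidence is that both contributions carry the same factor $|\omega|^{1+2p/d-p/q}$ once one uses $N_\gamma(\omega)^{-p} = (|\omega|/|\Omega|)^{p/d}$ and the power-mean inequality to replace $|f|^p_{W^1_q(\omega)}+|f|^p_{W^2_q(\omega)}$ by $\Phi(\omega)^{p/q}$. With the specific $\alpha$ of \eqref{alpha} one checks
\[
|\omega|^{1+2p/d-p/q}\, \Phi(\omega)^{p/q} \;=\; |\omega|^{-1/d}\, \bigl(g_\alpha(\omega)\bigr)^{p/q}.
\]
Summing over $\omega \in \square_m$, bounding each $g_\alpha(\omega) \leqslant G_\alpha(\square_m)$, and using the identity $\sum_{\omega \in \square_m}|\omega|^{-1/d} = N_m/|\Omega|^{1/d}$ (valid because for $\gamma=1/d$ every $N_\gamma(\omega) = (|\Omega|/|\omega|)^{1/d}$ is an exact power of two), I plug in the Lemma~\ref{lemma1} bound. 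The resulting exponent of $N_m$ simplifies to $-2p/(d+1)$ by direct algebra, and $N_m \geqslant N/2^d$ converts this into an estimate in terms of $N$.

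The case $p = \infty$ is analogous: sums over slices and cells are replaced by maxima, the same algebraic pairing of linear and remainder contributions is preserved, and the exponent computation is identical. The step I expect to be the main obstacle is the careful balancing of the Sobolev-Poincar\'e chain: in the subregime $\tfrac{1}{d}+\tfrac{1}{p} < \tfrac{1}{q}$ permitted by \eqref{qcond}, a single application of $W^1_q \hookrightarrow L_p$ on the cube fails, and the two-step passage through a correctly chosen intermediate $q'$ is essential; the matching of the two resulting contributions is not a coincidence but is engineered into the definition \eqref{alpha} of $\alpha$.
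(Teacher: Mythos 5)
Your proposal is correct and follows essentially the same route as the paper's proof: an affine approximant $\ell_\omega$ with average gradient $\mathbf{h}_\omega$ on each dyadic cube, a two-step Sobolev--Poincar\'e estimate through an intermediate exponent for the remainder, the H\"older bound on $\|\mathbf{h}_\omega\|$ together with the slice width for the linear part, the identification of the common factor $|\omega|^{-1/d}g_\alpha(\omega)^{p/q}$, and Lemma~\ref{lemma1} (with the same equivalence $\alpha\geqslant\frac1d\Leftrightarrow$\eqref{qcond}) to get the exponent $-\frac{2}{d+1}$. The only differences are organizational (you split $f-f_\delta$ directly and use the factor-$2$ near-best property of slice averages on the remainder, where the paper first estimates against averages of $\ell_\omega$ and converts to $s_N(f)$ at the end; your interval condition on $q'$ matches the paper's explicit choice $\tau=\frac{2pq}{p+q}$), and your handling of $N_m\geqslant 2^{-d}N$ is, if anything, slightly more careful.
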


\begin{remark}\rm
	The assertion of Theorem~\ref{theorem1} in the case $q=p$ was proved in \cite{D12} and in the case $q > p$ easily follows from that result.
\end{remark}

\begin{proof} 
	For every cube $\omega\in\square_m$, we denote by ${\bf c}_\omega$ the center of $\omega$ and consider the linear approximant of $f$ on $\omega$
	in the form (see~\cite{D14}):
	\[
	\ell_{\omega}({\bf x}):=\displaystyle \frac{1}{|\omega|} \int_{\omega} f({\bf x})\,{\rm d}{\bf x} + \left<{\bf h}_\omega,\;{\bf x} - {\bf c}_{\omega}\right>,
	\] 
	where $\left<\cdot, \cdot\right>$ is the scalar product in $\mathbb{R}^d$. 
		
	We start with estimating the $L_p$-error of approximation of $f$ by the piecewise constant function 
	\[
	\tilde{s}_N(\ell) := \sum_{\delta\in\triangle}\tilde{s}_\delta\cdot\chi_\delta,
	\] 
	where 
	$$
	\tilde{s}_\delta:= |\delta|^{-1} \int_\delta \ell_{\omega}({\bf x})\,{\rm d}{\bf x},\quad 
	\delta \in \Lambda_{\omega} :=  \{\delta\in\triangle_N\,:\, \delta\subset\omega\}.$$ 	
	In view of triangle inequality, for every $\omega\in\square_m$,
	\begin{equation*}
		\left\|f - \tilde{s}_N(\ell) \right\|_{L_p \left(\omega\right)} \leqslant  \left\| f - \ell_{\omega} \right\|_{L_p \left(\omega\right)} + \left\| \ell_{\omega} - \tilde{s}_N(\ell) \right\|_{L_p \left(\omega\right)}.
	\end{equation*}
	
	Let us estimate $\|f-\ell_\omega\|_{L_p(\omega)}$ by applying the Sobolev-Poincar\'e inequality \eqref{SP} twice. To this end we choose $1\leqslant \tau\leqslant\infty$
	such that simultaneously $\frac{1}{d} + \frac{1}{p} -\frac{1}{\tau} \geqslant 0$ and $\frac{1}{d} + \frac{1}{\tau} -\frac{1}{q} \geqslant 0$, for example 
	$\tau=\frac{2pq}{p+q}$. Since $\int_\omega \left(f({\bf x}) - \ell_\omega({\bf x})\right)\,{\rm d}{\bf x} = 0$, we can apply \eqref{SP}:
	\[
	\displaystyle \left\|f - \ell_{\omega} \right\|_{L_p(\omega)} \leqslant k_1  |\omega|^{\frac{1}{d} + \frac{1}{p} -\frac{1}{\tau}} \left\|\nabla f -  {\bf h}_\omega\right\|_{L_\tau(\omega)},
	\]
	where $k_1=C_\mathrm{SP}(d,p,\tau)$. Using the triangle inequality and the Sobolev-Poincar\'e inequality for the second time, we obtain
	\[
	\begin{array}{rcl}
	\displaystyle \|\nabla f - {\bf h}_\omega\|_{L_\tau(\omega)} 
	& = & \displaystyle \Big\|\Big( \sum\limits_{j=1}^d \Big( \frac{\partial f}{\partial x_j} 
	- \frac{1}{|\omega|} \int_{\omega} \frac{\partial f}{\partial x_j} ({\bf x})\, {\rm d}{\bf x} \Big)^2 \Big)^{\frac{1}{2}} \Big\|_{L_\tau(\omega)}\\
	&\leqslant & \displaystyle \sum\limits_{j=1}^d \Big\|\frac{\partial f}{\partial x_j} 
	- \frac{1}{|\omega|} \int_{\omega} \frac{\partial f}{\partial x_j} ({\bf x})\, {\rm d}{\bf x} \Big\|_{L_\tau(\omega)}\\
	&\leqslant & \displaystyle \sum\limits_{j=1}^d k_2 |\omega|^{\frac{1}{d} + \frac{1}{\tau} -\frac{1}{q}} \Big\| \nabla \Big(\frac{\partial f}{\partial x_j} \Big)\Big\|_{L_q(\omega)} 
	\leqslant \displaystyle k_2 |\omega|^{\frac{1}{d} + \frac{1}{\tau} -\frac{1}{q}} |f|_{W_q^2(\omega)},
	\end{array}
	\]
	where $k_2=C_\mathrm{SP}(d,\tau,q)$. Combining the last two inequalities we obtain
	\begin{equation}
		\label{approx_func_by_linear}
		\left\|f - \ell_\omega\right\|_{L_\tau(\omega)} \leqslant k_3 |\omega|^{\frac{2}{d} + \frac{1}{p} - \frac{1}{q}} |f|_{W^2_q(\omega)},
	\end{equation}
	where $k_3:= k_1 k_2$ depends only on $d$, $p$, $\tau$ and $q$.
	
	Now we estimate the $L_p$-distance between $\ell_\omega$ and $\tilde{s}_N(\ell)$ on $\omega$,
	\begin{equation}
		\label{const_aprox}
		\begin{array}{rcl}
			\displaystyle \left\| \ell_\omega - \tilde{s}_N(\ell) \right\|^p_{L_p \left(\omega\right)} 
			\!& \!= \!& \!\displaystyle \sum\limits_{\delta\in\Lambda_\omega} \left\| \ell_\omega - \tilde{s}_{\delta}\right\|^p_{L_p \left(\delta\right)} 
			= \sum\limits_{\delta\in\Lambda_\omega} \int_{\delta} \Big| \Big< {\bf h}_{\omega},\;{\bf x} - \frac{1}{|\delta|}  \int_{\delta} {\bf u}\, {\rm d}{\bf u} \Big>\Big|^p {\rm d}{\bf x}\\
			\!& \!= \!& \!\displaystyle \sum\limits_{\delta\in\Lambda_\omega}\frac{1}{|\delta|^p}  \int_{\delta} \Big| \int_{\delta} \left< {\bf h}_{\omega}, {\bf x} - {\bf u}\right> {\rm d}{\bf u} \Big|^p {\rm d}{\bf x}\\
			\!& \!\leqslant\!& \!\displaystyle \sum\limits_{\delta\in\Lambda_\omega} \frac{1}{|\delta|^p}  
			\int_{\delta} \Big| \int_{\delta} \left\|{\bf h}_{\omega}\right\|_2 ({\bf x}-{\bf u})_{{\bf h}_{\omega}}\, {\rm d}{\bf u} \Big|^p {\rm d}{\bf x},
		\end{array}
	\end{equation}
	where $({\bf x}-{\bf u})_{{\bf h}_{\omega}}$ denotes the length of the projection of ${\bf x} - {\bf u}$ on a unit vector parallel to ${\bf h}_{\omega}$ and $\left\|{\bf h}_\omega\right\|_{2}$ stands for the Euclidean norm of ${\bf h}_\omega$. 	
	By the H\"older inequality,
	\[
	\begin{array}{rcl}
	\displaystyle\left\|{\bf h}_{\omega}\right\|_2 
	& = &\displaystyle \Big(\sum\limits_{k=1}^d \Big(\frac{1}{|\omega|} \int_{\omega} \frac{\partial f}{ \partial x_k} ({\bf x})\,{\rm d}{\bf x}\Big)^2\Big)^{\frac12} 
	\leqslant \displaystyle \frac{1}{|\omega|}\sum\limits_{k=1}^d \int_{\omega} \Big|\frac{\partial f}{ \partial x_k} ({\bf x})\Big|\,{\rm d}{\bf x} \\
	& \leqslant & \displaystyle \frac{1}{|\omega|} \sum\limits_{k=1}^d |\omega|^{1-\frac{1}{q}}\Big(\int_{\omega} \Big|\frac{\partial f}{ \partial x_k}({\bf x})\Big|^q {\rm d}{\bf x}\Big)^{\frac{1}{q}}
	= \displaystyle|\omega|^{-\frac{1}{q}} |f|_{W^1_q(\omega)}.
	\end{array}
	\]
	It is also clear that, for every $\delta\in\Lambda_\omega$ and ${\bf x},{\bf u} \in \delta$, we have $({\bf x}-{\bf u})_{{\bf h}_{\omega}}\leqslant\frac{\textrm{diam}\,(\omega)}{N_\gamma(\omega)}$.
	Substituting the above inequalities into~\eqref{const_aprox} we obtain
	\begin{equation}
		\label{approx_linear_by_constant}
		\begin{array}{rcl}
			\displaystyle\left\| \ell_\omega - \tilde{s}_N(\ell)\right\|^p_{L_p \left(\omega\right)} & \leqslant & \displaystyle \sum\limits_{\delta\in\Lambda_\omega} |\delta|  \cdot|\omega|^{-\frac{p}{q}} 
			\displaystyle \Big(\frac{\textrm{diam}\,(\omega)}{N_{\gamma}(\omega)}\Big)^p |f|^p_{W^1_q(\omega)}\\
			& = & \displaystyle d^{\frac p2}\cdot    |\omega|^{1+\frac{2p}{d}-\frac{p}{q}} |\Omega|^{-\frac pd} |f|^p_{W^1_q(\omega)}.
		\end{array}
	\end{equation}
	
	Combining \eqref{approx_func_by_linear} and~\eqref{approx_linear_by_constant} and applying the H\"older inequality we obtain
	\begin{equation}
		\label{ineq_with_w_and_norm}
		\begin{array}{rcl}
			\displaystyle \left\| f - \tilde{s}_N \left(\ell\right) \right\|_{L_p\left(\omega\right)} & \leqslant & \displaystyle k_4\cdot \left|\omega\right|^{\frac{2}{d} + \frac{1}{p} - \frac{1}{q}} \left( \left|f\right|_{W^1_q\left(\omega\right)} + \left|f\right|_{W^2_q(\omega)} \right) \\
			& \leqslant & \displaystyle k_5\cdot \left|\omega\right|^{\frac{2}{d} + \frac{1}{p} - \frac{1}{q}} \left( \left|f\right|^q_{W^1_q\left(\omega\right)} + \left|f\right|^q_{W^2_q(\omega)} \right)^{\frac 1q},
		\end{array}
	\end{equation}
	where $k_4:= \max\left\{k_3;\sqrt{d} \, |\Omega|^{-\frac{1}{d}} \right\}$ and $k_5 := 2^{1-\frac 1q}k_4$. Using definitions of $\alpha$ and functions $g_\alpha$ and
	$G_\alpha$ we rewrite \eqref{ineq_with_w_and_norm} in the following way 
	\[
	\left\| f - \tilde{s}_N \left(\ell\right) \right\|^p_{L_p\left(\omega\right)} \leqslant k_5^p\cdot \left| \omega\right|^{-\frac{1}{d}} \left(g_\alpha(\omega)\right)^{\frac{p}{q}} \leqslant k_5^p \left| \omega\right|^{-\frac{1}{d}} \left(G_\alpha\left(\square_m\right)\right)^{\frac{p}{q}}.
	\]
	Hence,
	\[
	\left\| f - \tilde{s}_N \left(\ell\right)\right\|^p_{L_p \left(\Omega\right)} = \sum_{\omega \in \square_m}\left\| f - \tilde{s}_N \left(\ell\right) \right\|^p_{L_p\left(\omega\right)} \leqslant k_5^p \sum_{\omega \in \square_m} \left|\omega\right|^{-\frac{1}{d}} G_\alpha^{\frac{p}{q}} \left(\square_m\right) = k_5^p\cdot|\Omega|^{-\frac 1d}\cdot N_m \cdot G_\alpha^{\frac{p}{q}} \left(\square_m\right).
	\]
	
	Combining this inequality with Lemma~\ref{lemma1}, we arrive at
	\[
	\begin{array}{rcl}
	\displaystyle \left\|f - \tilde{s}_N\left(\ell\right)\right\|_{L_p \left(\Omega\right)} 
	&\leqslant & \displaystyle k_5\cdot|\Omega|^{-\frac 1{pd}} N_m^{\frac{1}{p}} G_\alpha^{\frac{1}{q}} \left(\square_m\right) \leqslant \displaystyle k_6 N_m^{\frac{1}{p}} N_m^{-(\alpha+1)\frac{1}{q(1+\frac{1}{d})}} {\Phi^{\frac{1}{q}}(\Omega)} \\
	& \leqslant & \displaystyle k_7  N^{ -\frac{2}{d+1}} \left( |f|^q_{W^1_q(\Omega)} + |f|^q_{W^2_q(\Omega)} \right)^{\frac{1}{q}},
	\end{array}
	\]
	where $k_6:= k_5 C_1^{\frac 1q}\left(d,\gamma,\alpha\right)\cdot |\Omega|^{\frac{\alpha}{q}-\frac{1}{pd}}$ and $k_7 := 2^{-\frac{2d}{d+1}}k_6$, with $C_1$ from \eqref{estimate} and $\gamma,\alpha$ as in Algorithm~\ref{algorithm1}.
	
	Finally, for every $\delta \in \triangle_N$, it is easy to see that 
	$\left\|f - f_\delta\right\|_{L_p(\delta)} \leqslant 2\inf\limits_{c\in\mathbb{R}} \|f - c\|_{L_p(\delta)}$, 
	where $f_\delta$ was defined in Algorithm~\ref{algorithm1}. Therefore
	\[
	\left\|f - s_N\left(f\right)\right\|_{L_p\left(\Omega\right)} \leqslant 2 k_7 N^{ -\frac{2}{d+1}} 
	\left( |f|^q_{W^1_q(\Omega)} + |f|^q_{W^2_q(\Omega)} \right)^{\frac{1}{q}},
	\]
	which proves the theorem with
	\[
	C_3(d,p,q,|\Omega|) = 2^{\frac{2}{d+1}-\frac 1q}\, C_1^{\frac 1q}(d,\gamma,\alpha)\left|\Omega\right|^{\frac{\alpha}{q} - \frac{1}{pd}}\, \max{\left\{C_\mathrm{SP}(d,p,\tau) C_\mathrm{SP}(d,\tau,q);\sqrt{d}\,|\Omega|^{-\frac 1d}\right\}}.\qed
	\]
\end{proof}

We now establish an upper estimate for the order of $\left\| f - s_N \left( f\right) \right\|_{L_p \left(\Omega\right)}$ as $N\to\infty$ 
provided that $q$ does not satisfy~\eqref{qcond}. To this end we will apply Lemma~\ref{lemma2} instead of Lemma~\ref{lemma1}.

\begin{theorem}
	\label{theorem2}
	Let $d\geqslant2$, $N\in\mathbb{N}$, and let $1 \leqslant p\leqslant \infty$ and $1\leqslant q<\infty$ be such that
	\[
	\frac{2}{d+1} + \frac{1}{p} -\frac{1}{q} < 0\quad\textrm{and}\quad \left\{\begin{array}{ll} \displaystyle \frac{2}{d} + \frac{1}{p} - \frac{1}{q} \geqslant 0, & p < \infty,\\ [10pt] \displaystyle \frac{2}{d} + \frac{1}{p} - \frac{1}{q} > 0, & p = \infty.\end{array}\right.
	\]
	Then for any  $f\in W_q^2(\Omega)$ the piecewise constant spline $s_N(f)$ generated by Algorithm~\ref{algorithm1} satisfies 
	\[
	E_N(f)_p \leqslant \left\| f - s_N \left(f\right) \right\|_{L_p\left(\Omega\right)} \leqslant C_4(d,p,q,|\Omega|) N^{-d\left(\frac{2}{d} + \frac{1}{p} - \frac{1}{q}\right)}
	\left(|f|^q_{W^1_q(\Omega)} + |f|^q_{W^2_q(\Omega)}\right)^{\frac{1}{q}},
	\]
	where the constant $C_4(d,p,q,|\Omega|)$ depends only on $d,p,q,|\Omega|$.
	In particular, 
	\[
	E_N(f)_p\leqslant \left\| f - s_N \left(f\right) \right\|_{L_p \left(\Omega\right)}=\smallo(N^{-\frac1d}),\qquad f\in W_q^2(\Omega),
	\]
	as soon as
	\begin{equation}
		\label{nqcond}
		\frac{2}{d}  + \frac{1}{p} - \frac{1}{q} > \frac{1}{d^2}.
	\end{equation}
\end{theorem}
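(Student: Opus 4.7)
The plan is to follow the proof of Theorem~\ref{theorem1} verbatim up through the per-cube local estimate and the corresponding global bound, and only replace the final invocation of Lemma~\ref{lemma1} by Lemma~\ref{lemma2}, which is precisely the instrument designed for the regime $\alpha<\gamma$ in which Algorithm~\ref{algorithm1} now operates. The per-cube inequality
\[
	\|f-\tilde s_N(\ell)\|_{L_p(\omega)}^p \leqslant k_5^p\,|\omega|^{-1/d}\,g_\alpha(\omega)^{p/q},\qquad \omega\in\square_m,
\]
established in the proof of Theorem~\ref{theorem1} relies solely on the Sobolev--Poincar\'e inequality, the triangle inequality and the special choice~\eqref{alpha} of $\alpha$; nothing in that derivation uses $\alpha\geqslant 1/d$. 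Summing over $\omega\in\square_m$, bounding $g_\alpha(\omega)\leqslant G_\alpha(\square_m)$, and noting the exact identity $\sum_{\omega\in\square_m}|\omega|^{-1/d}=|\Omega|^{-1/d}\,N_m$ (which holds because $(|\Omega|/|\omega|)^{1/d}$ is a positive integer for every dyadic cube), I arrive at
\[
	\|f-\tilde s_N(\ell)\|_{L_p(\Omega)}^p \leqslant k_5^p\,|\Omega|^{-1/d}\,N_m\,G_\alpha(\square_m)^{p/q}.
\]

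The central step is then to apply Lemma~\ref{lemma2} with the same $\Phi(\cdot)=|f|^q_{W^1_q(\cdot)}+|f|^q_{W^2_q(\cdot)}$, $\gamma=1/d$ and the $\alpha$ from Algorithm~\ref{algorithm1}. The first hypothesis $0<\alpha<\gamma$ is equivalent to the assumption $\frac{2}{d+1}+\frac 1p<\frac 1q$ of Theorem~\ref{theorem2}. The only subtle point---and the main obstacle I anticipate---is verifying the subadditivity of $\Phi^{\gamma/\alpha}$ when $\gamma/\alpha>1$. This is settled by observing that $\Phi$ is in fact \emph{additive} across dyadic subdivisions, so the elementary inequality $\sum_i x_i^s\leqslant\bigl(\sum_i x_i\bigr)^s$, valid for all $s\geqslant 1$ and $x_i\geqslant 0$, yields
\[
	\sum_{\omega\in\square}\Phi^{\gamma/\alpha}(\omega) \leqslant \Big(\sum_{\omega\in\square}\Phi(\omega)\Big)^{\gamma/\alpha} = \Phi^{\gamma/\alpha}(\Omega)
\]
for every dyadic subdivision $\square$ of $\Omega$. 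Lemma~\ref{lemma2} then delivers $G_\alpha(\square_m)\leqslant C_2(d,\gamma,\alpha)\,N_m^{-\alpha/\gamma}\,|\Omega|^\alpha\,\Phi(\Omega)$.

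Inserting this into the previous display, extracting the $p$-th root, and replacing $N_m$ by $N$ up to a constant factor (using $N\leqslant N_{m+1}\leqslant 2^{d+1}N_m$), the resulting $N$-exponent becomes
\[
	\tfrac 1p - \tfrac{\alpha}{q\gamma} = \tfrac 1p - \tfrac{d\alpha}{q} = -d\Big(\tfrac 2d + \tfrac 1p - \tfrac 1q\Big),
\]
as one checks by direct substitution of~\eqref{alpha}. The passage from $\tilde s_N(\ell)$ to $s_N(f)$ costs only the usual factor of $2$ exactly as at the end of the proof of Theorem~\ref{theorem1}, yielding the stated bound on $\|f-s_N(f)\|_{L_p(\Omega)}$. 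The $\smallo(N^{-1/d})$ assertion is then immediate, since $-d\bigl(\tfrac 2d+\tfrac 1p-\tfrac 1q\bigr)<-\tfrac 1d$ is exactly the strict condition~\eqref{nqcond}.
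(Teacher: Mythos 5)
Your proposal is correct and is essentially the paper's own proof, which likewise simply repeats the argument of Theorem~\ref{theorem1} and invokes Lemma~\ref{lemma2} in place of Lemma~\ref{lemma1}, with exactly the exponent arithmetic you carry out. One minor correction: $\Phi$ is only subadditive, not additive (for $q>1$ and $d\geqslant 2$ the seminorm $|f|_{W^r_q}$ is a sum over multi-indices, so subadditivity requires Minkowski's inequality in $\ell_q$), but your chain $\sum_{\omega\in\square}\Phi^{\gamma/\alpha}(\omega)\leqslant\bigl(\sum_{\omega\in\square}\Phi(\omega)\bigr)^{\gamma/\alpha}\leqslant\Phi^{\gamma/\alpha}(\Omega)$ still holds with the final equality weakened to an inequality, so the hypothesis of Lemma~\ref{lemma2} is verified and nothing in your argument breaks.
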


\begin{proof}
	Repeating the proof of Theorem~\ref{theorem1} and applying Lemma~\ref{lemma2} we obtain the estimate
	\[
	\displaystyle \left\|f - \tilde{s}_N\left(\ell\right)\right\|_{L_p \left(\Omega\right)} 
	\leqslant \displaystyle k_5|\Omega|^{-\frac{1}{pd}} N_m^{\frac 1p} G_\alpha^{\frac{1}{q}}\left(\square_m\right) 
	\leqslant k_6N_m^{\frac{1}{p}} N_m^{-\frac{d\alpha}{q}} \Phi^{\frac{1}{q}}(\Omega) 
	\leqslant \displaystyle k_7N^{-d\left(\frac{2}{d} + \frac{1}{p} - \frac{1}{q}\right)} \Phi^{\frac{1}{q}}(\Omega),
	\]
	where $k_6 = k_6(d,p,\tau,q,|\Omega|) := k_5C_2^{\frac 1q}\left(d,\gamma,\alpha\right)\left|\Omega\right|^{\frac{\alpha}{q}-\frac{1}{pd}}$ and $k_7 := 2^{-\frac{2d}{d+1}}k_6$, with the constant $C_2\left(d,\gamma,\alpha\right)$  of Lemma~\ref{lemma2}. The proof is completed by the same arguments as in the proof of Theorem~\ref{theorem1}. The estimate is valid with
	\[
	C_4(d,p,q,|\Omega|) = 2^{\frac{2}{d+1}-\frac 1q} C_2^{\frac 1q}\left(d,\gamma,\alpha\right)\left|\Omega\right|^{\frac{\alpha}{q} - \frac{1}{pd}}
	\max{\left\{C_\mathrm{SP}(d,p,\tau)C_\mathrm{SP}(d,\tau,q);\sqrt{d}|\Omega|^{-\frac 1d}\right\}},
	\]
	with $\gamma$ and $\alpha$ as in Algorithm~\ref{algorithm1} and $1\leqslant\tau\leqslant\infty$ being any number such that $\frac{1}{d} + \frac{1}{p} - \frac{1}{\tau}\geqslant 0$ and $\frac{1}{d} + \frac{1}{\tau} - \frac{1}{q} \geqslant 0$. $\qed$
\end{proof}

Using the embedding theorem $W^r_q(\Omega)\subset W^2_{q'}(\Omega)$, where $\frac 1{q'} =  \frac{r-2}{d}+\frac 1q$ \cite[p. 328]{Triebel_78}, 
we deduce from Theorems~\ref{theorem1} and~\ref{theorem2} the following statement that summarizes the main results of this section.

\begin{corollary}
	\label{cor}
	Let $d\geqslant 2$, $r\geqslant 2$, $1\leqslant p\leqslant \infty$, $1 \leqslant q<\infty$. Then
	\begin{align}
		\label{cor1}
		E_N(W^r_q(\Omega))_p
		&=\mathcal{O}(N^{-\frac{2}{d+1}})\quad\text{if}\qquad \frac{r}{d}  + \frac{1}{p} - \frac{1}{q} \geqslant \frac{2}{d(d+1)},\\
		\label{cor2}
		E_N(W^r_q(\Omega))_p
		&=\smallo(N^{-\frac1d})\qquad\text{if}\qquad \frac{r}{d}  + \frac{1}{p} - \frac{1}{q} > \frac{1}{d^2},
	\end{align}
	where 
	$$
	E_N(W^r_q(\Omega))_p:=\sup\limits_{f\in W^r_q(\Omega),\atop \left\|f\right\|_{W^r_q(\Omega)} \leqslant 1}E_N(f)_p.$$
\end{corollary}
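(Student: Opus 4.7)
My plan is to reduce both assertions to the case $r=2$ (already treated in Theorems~\ref{theorem1} and~\ref{theorem2}) via the continuous Sobolev embedding $W^r_q(\Omega)\hookrightarrow W^2_{q'}(\Omega)$ already invoked above (\cite[p.~328]{Triebel_78}). I would choose $q'$ to be the critical index determined by $\tfrac{1}{q'}=\tfrac{1}{q}-\tfrac{r-2}{d}$ whenever this value lies in $(0,1]$; then $q'\geqslant q\geqslant 1$ and the embedding is bounded:
\[
\|f\|_{W^2_{q'}(\Omega)}\leqslant c(d,r,q,|\Omega|)\,\|f\|_{W^r_q(\Omega)}.
\]

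The key arithmetic step is the pair of identities
\[
\frac{2}{d+1}+\frac{1}{p}-\frac{1}{q'}=\Big(\frac{r}{d}+\frac{1}{p}-\frac{1}{q}\Big)-\frac{2}{d(d+1)},\qquad \frac{2}{d}+\frac{1}{p}-\frac{1}{q'}=\frac{r}{d}+\frac{1}{p}-\frac{1}{q},
\]
both immediate from the definition of $q'$. The first shows that the hypothesis of Theorem~\ref{theorem1} applied to $W^2_{q'}(\Omega)$ is exactly the hypothesis $\tfrac{r}{d}+\tfrac{1}{p}-\tfrac{1}{q}\geqslant\tfrac{2}{d(d+1)}$ of~\eqref{cor1}. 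Combining Theorem~\ref{theorem1} with the continuity of the embedding then yields~\eqref{cor1} with a constant uniform over the unit ball of $W^r_q(\Omega)$. The second identity shows that condition~\eqref{nqcond} for $q'$ is the same as $\tfrac{r}{d}+\tfrac{1}{p}-\tfrac{1}{q}>\tfrac{1}{d^2}$, so Theorem~\ref{theorem2} gives an estimate $\mathcal{O}(N^{-\beta})$ with exponent $\beta=d\bigl(\tfrac{2}{d}+\tfrac{1}{p}-\tfrac{1}{q'}\bigr)$ strictly exceeding $\tfrac{1}{d}$, hence $\smallo(N^{-1/d})$, proving~\eqref{cor2}.

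The only mildly technical point I foresee is ensuring that an admissible finite $q'\geqslant 1$ exists in every parameter regime. When $\tfrac{1}{q}-\tfrac{r-2}{d}\leqslant 0$, the critical $q'$ is formally infinite and thus not admissible in Theorems~\ref{theorem1}--\ref{theorem2}; in this regime, however, $W^r_q(\Omega)\hookrightarrow W^2_{q^\ast}(\Omega)$ is bounded for every finite $q^\ast\geqslant 1$ (since $W^{r-2}_q(\Omega)\hookrightarrow L_\infty(\Omega)$), and I would simply pick any sufficiently large finite $q^\ast$ so that $\tfrac{2}{d+1}+\tfrac{1}{p}\geqslant\tfrac{1}{q^\ast}$ or $\tfrac{2}{d}+\tfrac{1}{p}>\tfrac{1}{q^\ast}$ holds---such $q^\ast$ exists because the right-hand limits $\tfrac{2}{d+1}+\tfrac{1}{p}$ and $\tfrac{2}{d}+\tfrac{1}{p}$ are strictly positive. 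Apart from this case distinction and a routine verification that the strict-versus-weak inequalities match in the limiting cases (in particular $p=\infty$ in Theorem~\ref{theorem2}), the remainder is bookkeeping of constants.
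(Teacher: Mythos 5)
Your proposal is correct and follows essentially the same route as the paper: embed $W^r_q(\Omega)$ into $W^2_{q'}(\Omega)$ at the critical index and apply Theorems~\ref{theorem1} and~\ref{theorem2}, with your choice $\frac{1}{q'}=\frac{1}{q}-\frac{r-2}{d}$ being the correct (standard) form of the embedding that makes the exponent arithmetic match the conditions in \eqref{cor1} and \eqref{cor2} (the paper's displayed formula $\frac{1}{q'}=\frac{r-2}{d}+\frac{1}{q}$ has the sign the other way, which only works as you wrote it), and your treatment of the regime where the critical $q'$ is not finite is a reasonable elaboration of what the paper leaves implicit. One small repair: for \eqref{cor2}, when $\frac{2}{d+1}+\frac{1}{p}-\frac{1}{q'}\geqslant 0$ the hypothesis of Theorem~\ref{theorem2} fails, so in that case you should invoke Theorem~\ref{theorem1} instead, whose rate $N^{-\frac{2}{d+1}}$ is already $\smallo(N^{-\frac{1}{d}})$ since $d\geqslant 2$.
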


\section{An estimate from below}
\label{below}

As the above results only apply to $W^{r}_q(\Omega)$ with $r\geqslant 2$, there remains the question whether the orders $E_N(f)_p=\mathcal{O}(N^{-\frac{2}{d+1}})$
or at least $E_N(f)_p=\smallo(N^{-\frac1d})$ can be achieved when $r<2$. Leaving out the question of the validity of these bounds for individual functions in
$W^{r}_q(\Omega)$, we provide an estimate from below for $E_N(W^r_q(\Omega))_\infty$ that shows in particular that the optimal order of \eqref{cor1} is not achieved
when $p=\infty$ and $r < \frac{2d}{d+1}$.

\begin{theorem}
	\label{lb}
	Let $d\geqslant 2$, $r\in(0,2)$, $1\leqslant q <\infty$, and $\frac{r}{d} - \frac{1}{q} > 0$. Then
	\begin{equation}
		\label{lbeq}
		\overline{\lim\limits_{N \to \infty}} \sup\limits_{f\in W^r_q(\Omega),\atop \left\|f\right\|_{W^r_q(\Omega)} \leqslant 1} N^{\frac{r}{d}}E_N(f)_\infty> 0.
	\end{equation}
	In particular,	$E_N(W^r_q(\Omega))_\infty\ne \smallo(N^{-\frac{r}{d}})$ and
	\begin{equation}
		\label{lbeq2}
		E_N(W^r_q(\Omega))_\infty\ne \mathcal{O}(N^{-\frac{2}{d+1}})\qquad\text{if}\qquad r < \frac{2d}{d+1}.
	\end{equation}
\end{theorem}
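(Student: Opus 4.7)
The plan is to exhibit, for each large $N$, a single test function $f_N\in W^r_q(\Omega)$ with $\|f_N\|_{W^r_q(\Omega)}\leqslant 1$ and $E_N(f_N)_\infty\geqslant c\,N^{-r/d}$ for an absolute constant $c>0$; this immediately yields \eqref{lbeq}, and \eqref{lbeq2} will then follow by comparing exponents (an estimate $E_N(W^r_q(\Omega))_\infty=\mathcal{O}(N^{-2/(d+1)})$ would force $N^{r/d-2/(d+1)}\to 0$, which is impossible when $r<\frac{2d}{d+1}$). We would fix a non-negative bump $\phi\in C_c^\infty(\mathbb{R}^d)$ with $\phi(0)=1$, $\phi\leqslant 1$, $\mathrm{supp}\,\phi\subset B(0,1)$, and $\{\phi>1/2\}\subset B(0,1/2)$, and distribute $N$ points $x_1,\ldots,x_N$ on a regular lattice inside $\Omega$ with pairwise distances at least $c_0 N^{-1/d}$. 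Choosing $\rho:=\varepsilon N^{-1/d}$ with $\varepsilon>0$ small enough that the balls $B(x_i,\rho)$ are pairwise disjoint and $N V_d(\rho/2)^d<|\Omega|$ (where $V_d$ is the volume of the unit ball), and setting $A:=N^{-r/d}$, the candidate is
\[
    f_N(x):=A\sum_{i=1}^N\phi\bigl((x-x_i)/\rho\bigr).
\]

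The admissibility $\|f_N\|_{W^r_q(\Omega)}\leqslant C_1$ will follow from the dilation identity $|\phi(\cdot/\rho)|_{W^r_q(\mathbb{R}^d)}=\rho^{d/q-r}|\phi|_{W^r_q(\mathbb{R}^d)}$, obtained by direct change of variables in either the classical or the Slobodeckij seminorm. Summing the $q$-th powers over the $N$ disjointly supported translates gives a diagonal contribution $N A^q\rho^{d-rq}|\phi|^q_{W^r_q}=O(1)$ under our choice of $A$ and $\rho$. For non-integer $r$ we must additionally control the cross contributions to the Slobodeckij double integral (interactions between distinct bumps, and between a single bump and the ambient zero region); using the separation $\geqslant c_0 N^{-1/d}$ of distinct supports, the smooth vanishing of $\phi$ at $\partial B(0,1)$, and the integrability of $|u|^{-(rq+d)}$ away from the origin, every such off-diagonal term respects the same dimensionless scaling $N A^q\rho^{d-rq}$ and the total stays bounded by a constant. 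Rescaling $f_N$ by $1/C_1$ secures $\|f_N\|_{W^r_q(\Omega)}\leqslant 1$ while keeping the amplitude of order $N^{-r/d}$.

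For the lower bound on $E_N(f_N)_\infty$, fix an arbitrary $\triangle\in\mathfrak{D}_N$. Since for any piecewise constant $s$ on $\triangle$ and any cell $\omega\in\triangle$ one has $\|f_N-s\|_{L_\infty(\Omega)}\geqslant\tfrac12(\sup_\omega f_N-\inf_\omega f_N)$, it suffices to exhibit one cell with oscillation at least $A/2$. We would split into two cases. In \emph{Case A} some cell $\omega\in\triangle$ contains two distinct centers $x_i,x_j$; by convexity the midpoint $(x_i+x_j)/2\in\omega$, and, since $|x_i-x_j|\geqslant 2\rho$ by construction, it lies outside every $B(x_k,\rho)$, so $f_N=0$ there while $f_N(x_i)=A$, giving oscillation $\geqslant A$. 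In \emph{Case B} every cell contains at most one center, hence, as the numbers of cells and centers both equal $N$, each cell $\omega_i$ contains exactly one $x_i$; assuming all oscillations were below $A/2$, the identity $f_N(x_i)=A$ would force $f_N>A/2$ throughout $\omega_i$, whence $\omega_i\subset\{f_N>A/2\}\subset B(x_i,\rho/2)$ by the choice of $\phi$, producing the contradiction $|\Omega|=\sum_i|\omega_i|\leqslant N V_d(\rho/2)^d<|\Omega|$. Either way $E_N(f_N)_\infty\geqslant A/(4C_1)\geqslant c\,N^{-r/d}$, proving the theorem. The principal technical obstacle we anticipate is precisely the rigorous bookkeeping of the Slobodeckij cross terms for non-integer $r$: this is a careful but routine calculation splitting the double integral by the location of each argument and invoking the support-separation together with the smoothness of $\phi$, and it can alternatively be bypassed by a Littlewood--Paley argument observing that $f_N$ is spectrally concentrated at frequency $\sim N^{1/d}$, which reduces the equivalent Besov norm to one frequency block.
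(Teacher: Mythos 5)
Your overall strategy is essentially the paper's (scaled bumps of height $\sim N^{-r/d}$ at $\sim N$ separated sites, control of the Sobolev--Slobodeckij norm including cross terms, then an oscillation-plus-counting argument against an arbitrary convex partition with at most $N$ cells), but the counting step contains a genuine gap. In Case B the inference ``the numbers of cells and centers both equal $N$, hence each cell contains exactly one $x_i$'' is invalid: in this paper a partition consists of pairwise disjoint open cells whose union only has \emph{full measure}, so the centers may all lie on cell boundaries and belong to no cell at all (for instance, the partition of $\Omega$ into $N$ congruent cubes whose \emph{vertices} are exactly the lattice points $x_i$ satisfies your Case B hypothesis vacuously), and moreover the partition may have fewer than $N$ cells. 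Cells containing no center are completely unconstrained by your argument ``$f_N(x_i)=A$ and oscillation $<A/2$ imply $\omega_i\subset B(x_i,\rho/2)$'', so the measure contradiction $|\Omega|=\sum_i|\omega_i|\leqslant N V_d(\rho/2)^d<|\Omega|$ collapses, even though the conclusion of the theorem is still true for such partitions. The repair is to argue per bump rather than per center: if every cell had oscillation $<A/2$, then every cell meeting the open set $\{f_N>A/2\}$ (which has positive measure inside each ball and must be covered by cells up to a null set) satisfies $\inf f_N>0$ on the cell and, being connected, lies inside a single ball $B(x_i,\rho)$; since the balls are disjoint this produces at least $N$ distinct cells each contained in a ball, and since $|\bigcup_i B(x_i,\rho)|<|\Omega|$ at least one further cell is needed, giving more than $N$ cells --- a contradiction. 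This is exactly the mechanism of the paper's proof, which takes, for each subcube center, a cell whose \emph{closure} contains that center, shows it must lie strictly inside the subcube, and then counts $m^d$ such cells plus at least one more.

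Two further points. In Case A the claim that the midpoint of $x_i$ and $x_j$ lies outside every $B(x_k,\rho)$ is false on a regular lattice: the midpoint of two centers two lattice steps apart is itself a center, where $f_N=A$. Use instead the point of the segment $[x_i,x_j]$ at distance $\rho$ from $x_i$; for $\varepsilon\leqslant c_0/2$ it is at distance at least $\rho$ from every center, so $f_N$ vanishes there and the oscillation on the cell is at least $A$. Finally, the admissibility bound $\|f_N\|_{W^r_q(\Omega)}\leqslant C_1$ is only asserted as routine; the off-diagonal Slobodeckij interactions are where the paper spends most of its effort (splitting pairs of subcubes according to $d(\mathbf{i},\mathbf{j})=0$, $=1$, $\geqslant 2$, with a reflection trick for adjacent cubes and a decay estimate for distant ones), and your scaling heuristic, while giving the correct order $NA^q\rho^{d-rq}=O(1)$, would need to be written out at that level of detail; note also that for the neighboring-bump terms the bound is not a small perturbation but of the same order as the diagonal, so it cannot be dismissed as negligible.
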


\begin{proof}
	For simplicity, assume that $\Omega = (0,1)^d$. For $m\in\mathbb{N}$, let 
	$\square_m = \left\{\omega_{\bf i}\right\}_{{\bf i}\in\mathcal{N}_m^d}$ be the partition of $\Omega$ into $m^d$ subcubes
	\[
	\textstyle\omega_{\bf i} := \left(\frac{i_1-1}{m},\frac{i_1}{m}\right)\times\cdots\times\left(\frac{i_d-1}{m},\frac{i_d}{m}\right),
	\qquad {\bf i}=(i_1,\ldots,i_d)\in\mathcal{N}_m^d,
	\]
	where 
	\[
	\mathcal{N}_m^d := \left\{{\bf i}\in\mathbb{N}^d\,:\,1\leqslant i_k\leqslant m,\;k=1,\ldots,d\right\}.
	\] 	
	
	We set ${\bf 1} = (1,\ldots,1)\in\mathbb{N}^d$ and $d({\bf i},{\bf j}) := \max\limits_{k=1,\dots,d} |i_k-j_k|$, ${\bf i},{\bf j}\in\mathbb{N}^d$, 
	and consider the infinitely differentiable compactly supported function $\varphi:\mathbb{R}^d\to\mathbb{R}$ defined by
	\[
	\varphi({\bf x}) := \left\{\begin{array}{ll}
	\exp{\big(-\frac{1}{1-|2{\bf x}-{\bf 1}|^2}\big)}, & |2{\bf x}-{\bf 1}|<1, \\
	0, & |2{\bf x}-{\bf 1}|\geqslant 1,
	\end{array} \right.
	\]
	where $|\cdot|$ stands for the Euclidean norm in $\mathbb{R}^d$. For ${\bf i}\in\mathcal{N}_m^d$, we scale $\varphi$ into the cube $\omega_{\bf i}$ as
	$\varphi_{\bf i}({\bf x}):=\varphi(m{\bf x}-{\bf i}+{\bf 1})$, such that $\varphi_{\bf i}({\bf x})\ne0$ on the set
	$\{{\bf x}\in\Omega:\,|{\bf x}-\frac{2{\bf i}-{\bf 1}}{2m}|<\frac{1}{2m}\}\subset\omega_{\bf i}$.
	Then
	\begin{equation}
		\label{seminorm_varphi_i}
		\left|\varphi_{\bf i}\right|_{W^s_q\left(\omega_{\bf i}\right)} = m^{s - \frac{d}{q}} \left|\varphi\right|_{W_q^s(\Omega)},\quad s\geqslant0.
	\end{equation}
	
	We construct a function $f_m\in W^r_q(\Omega)$ as  $f_m := \sum\limits_{{\bf i}\in \mathcal{N}_m^d} \varphi_{\bf i}$, and claim that
	\begin{equation}
		\label{norm_fm}
		\|f_m\|_{W^r_q(\Omega)}\le C_5 m^r \left\|\varphi\right\|_{W^r_q(\Omega)},
	\end{equation}
	where $C_5$ is a positive constant independent of $m$. By \eqref{seminorm_varphi_i} we see that for any $k\in\mathbb{Z}_+$, 
	$\left|f_m\right|_{W^k_q(\Omega)}^q 
	= m^d\cdot  m^{kq-d}\left|\varphi\right|_{W^k_q(\Omega)}^q 
	= m^{kq} \left|\varphi\right|_{W^k_q(\Omega)}^q$, which implies
	\begin{equation}
		\label{norm_fmk}
		\|f_m\|_{W^{\lfloor r\rfloor}_q(\Omega)}\le m^{\lfloor r\rfloor} \left\|\varphi\right\|_{W^r_q(\Omega)},
	\end{equation}
	and already shows \eqref{norm_fm} in the case $r=1$.
	
	It remains to give  an upper estimate for the seminorm of $f_m$ in the Sobolev-Slobodeckij space $W^r_q(\Omega)$ when $r>0$ and $r\ne 1$. We have
	\begin{equation}\label{fullsum}
		|f_m|^q_{W^r_q(\Omega)} 	\leqslant \sum\limits_{{\bf i}\in \mathcal{N}_m^d} \sum\limits_{{\bf j}\in\mathcal{N}_m^d} \sup\limits_{{\bf k}\in \mathbb{Z}_+^d\atop |{\bf k}|=\lfloor r\rfloor }  
		\int_{\omega_{\bf i}} \int_{\omega_{\bf j}} \frac{\left|D^{{\bf k}} \varphi_{\bf i}({\bf x}) - D^{{\bf k}} \varphi_{\bf j}({\bf y})\right|^q}{\left| {\bf x}-{\bf y}\right|^{\left\{r\right\}q+d}}\,{\rm d}{\bf x}{\rm d}{\bf y}.
	\end{equation}
	We split the sum \eqref{fullsum} into three pieces corresponding to ${\bf i},{\bf j}$ satisfying $d({\bf i},{\bf j})=0$, $d({\bf i},{\bf j})=1$ and 
	$d({\bf i},{\bf j})\geqslant 2$, respectively, and estimate each of them separately.
	
	For the case $d({\bf i},{\bf j})=0$, that is ${\bf i}={\bf j}$, we have the following equality
	\begin{equation}
		\label{estimatesum1}
		\sum\limits_{{\bf i}\in\mathcal{N}_m^d} \sup\limits_{{\bf k}\in \mathbb{Z}_+^d\atop |{\bf k}|
			=\lfloor r\rfloor } \int_{\omega_{\bf i}} \int_{\omega_{\bf i}} \frac{\left|D^{{\bf k}} \varphi_{\bf i}({\bf x}) - D^{{\bf k}} \varphi_{\bf i}({\bf y})\right|^q}{\left| {\bf x}-{\bf y}\right|^{\left\{r\right\}q+d}}\,{\rm d}{\bf x}{\rm d}{\bf y} 
		= \sum\limits_{{\bf i}\in\mathcal{N}_m^d} |\varphi_{\bf i}|^q_{W^r_q(\omega_{\bf i})} = m^{rq} \left|\varphi\right|_{W_q^r(\Omega)}^q.
	\end{equation}		
	
	If $d({\bf i},{\bf j})\geqslant 2$, then we use the substitution 
	${\bf y} = {\bf u} + \frac{1}{m}\,{\bf c}^{{\bf i},{\bf j}}$, with ${\bf c}^{{\bf i},{\bf j}} := {\bf j} - {\bf i}$, to obtain
	$$
	\int_{\omega_{\bf i}} \int_{\omega_{\bf j}} \frac{\left|D^{{\bf k}} \varphi_{\bf i}({\bf x}) - D^{{\bf k}} \varphi_{\bf j}({\bf y})\right|^q}{\left| {\bf x}-{\bf y}\right|^{\left\{r\right\}q+d}}\,{\rm d}{\bf x}{\rm d}{\bf y}
	=\int_{\omega_{\bf i}} \int_{\omega_{\bf i}} \frac{\left|D^{{\bf k}} \varphi_{\bf i}({\bf x}) - D^{{\bf k}} \varphi_{\bf i}({\bf u})\right|^q}{\left| {\bf x}-{\bf u} -\frac{1}{m} {\bf c}^{{\bf i}, {\bf j}}\right|^{\left\{r\right\}q+d}}\,{\rm d}{\bf x}{\rm d}{\bf u}.$$		
	Since		$d({\bf i},{\bf j}) - 1\geqslant\frac12 d({\bf i},{\bf j})$ and $|x_k - u_k| \leqslant \frac{1}{m}$, $k=1,\ldots,d$,
	as long as ${\bf x}, {\bf u} \in \omega_{\bf i}$, we have
	$$
	\big|{\bf x}-{\bf u} - \frac{1}{m} \,{\bf c}^{{\bf i}, {\bf j}}\big|\geqslant \frac{1}{m}\left(d({\bf i},{\bf j}) - 1\right) 
	\geqslant\frac1{2m} d({\bf i},{\bf j}),$$
	and hence
	\begin{align*}
		\int_{\omega_{\bf i}} \int_{\omega_{\bf i}} \frac{\left|D^{{\bf k}} \varphi_{\bf i}({\bf x}) - D^{{\bf k}} \varphi_{\bf i}({\bf u})\right|^q}{\left| {\bf x}-{\bf u} - \frac{1}{m} {\bf c}^{{\bf i}, {\bf j}}\right|^{\left\{r\right\}q+d}}\,{\rm d}{\bf x}{\rm d}{\bf u}
		&\leqslant \displaystyle \big(\tfrac{2m}{d({\bf i},{\bf j})}\big)^{\{r\}q+d} \int_{\omega_{\bf i}} \int_{\omega_{\bf i}} \left|D^{{\bf k}} \varphi_{\bf i}({\bf x}) - D^{{\bf k}} \varphi_{\bf i}({\bf u})\right|^q\, {\rm d}{\bf x}{\rm d}{\bf u}
		\\ &\leqslant 2^{q+\{r\}q+d} m^{\{r\}q} \left|\varphi_{\bf i}\right|^q_{W^{\lfloor r\rfloor }_q(\omega_{\bf i})} d({\bf i},{\bf j})^{-\{r\}q-d}.
	\end{align*}
	Now
	$$
	\sum\limits_{{\bf i},{\bf j}\in \mathcal{N}_m^d\atop d({\bf i},{\bf j})\geqslant 2}
	\frac{1}{d({\bf i},{\bf j})^{\{r\}q+d}} 
	\leqslant m^d\sum\limits_{{\bf i}\in \mathcal{N}_m^d\atop d({\bf 0},{\bf i})\geqslant 2}\frac{1}{d({\bf 0},{\bf i})^{\{r\}q+d}} 
	\leqslant m^d\cdot \int_{|{\bf x}|\geqslant 1} \frac{{\rm d}{\bf x}}{|{\bf x}|^{\{r\}q+d}} = \frac{\mu_d m^d}{\{r\}q},$$
	where $\mu_d$ is the volume of the $(d-1)$-dimensional unit sphere. (Remark that $\{r\}\ne 0$ as $r\not\in\mathbb{N}$.)
	It follows that
	\begin{equation}
		\label{estimatesum3}
		\sum\limits_{{\bf i},{\bf j}\in \mathcal{N}_m^d\atop d({\bf i},{\bf j})\geqslant 2} 
		\sup\limits_{{\bf k}\in \mathbb{Z}_+^d\atop |{\bf k}|=\lfloor r\rfloor } 
		\int_{\omega_{\bf i}} \int_{\omega_{\bf j}} \frac{\left|D^{{\bf k}} \varphi_{\bf i}({\bf x}) - D^{{\bf k}} \varphi_{\bf j}({\bf y})\right|^q}{\left| {\bf x}-{\bf y} \right|^{\left\{r\right\}q+d}}\,{\rm d}{\bf x}{\rm d}{\bf y}
		\leqslant 
		\frac{2^{q+\{r\}q+d}\mu_dm^{rq}}{\{r\}q} |\varphi|^q_{W^{\lfloor r\rfloor }_q\!(\Omega)}.
	\end{equation}
	
	It remains to consider the terms in \eqref{fullsum} with $d({\bf i},{\bf j})=1$. In this case $\omega_{\bf i}$ and $\omega_{\bf j}$ have a common face of
	dimension between $0$ and $d-1$. 
	We alter
	variables under the integral over $\omega_{\bf j}$ by substituting for ${\bf y}$ its reflection ${\bf u}$ with respect to $\omega_{\bf i} \cap \omega_{\bf j}$. Evidently,
	$\varphi_{\bf j}({\bf y})=\varphi_{\bf i}({\bf u})$ and $|{\bf x} - {\bf y}|\geqslant |{\bf x} - {\bf u}|$, ${\bf x}\in\omega_{\bf i}$. 
	Hence, for all ${\bf i},{\bf j}$ with $d({\bf i},{\bf j})=1$,
	\begin{align*}
		\int_{\omega_{\bf i}} \int_{\omega_{\bf j}} \frac{\left|D^{{\bf k}} \varphi_{\bf i}({\bf x}) - D^{{\bf k}} \varphi_{\bf j}({\bf y})\right|^q}{\left| {\bf x}-{\bf y}\right|^{\left\{r\right\}q+d}}\,{\rm d}{\bf x}{\rm d}{\bf y}
		&\leqslant
		\int_{\omega_{\bf i}} \int_{\omega_{\bf i}} \frac{\left|D^{{\bf k}} \varphi_{\bf i}({\bf x}) - D^{{\bf k}} \varphi_{\bf i}({\bf u})\right|^q}{\left| {\bf x}-{\bf u}\right|^{\left\{r\right\}q+d}}\,{\rm d}{\bf x}{\rm d}{\bf u}
		\\ &\leqslant m^{rq-d} \left|\varphi\right|_{W_q^r(\Omega)}^q. 
	\end{align*}
	Since
	$$
	\sum\limits_{\ell=0}^{d-1}\!\!\!\! \sum\limits_{{\bf i},{\bf j}\in \mathcal{N}_m^d\atop d({\bf i},{\bf j})=1,\, \dim(\omega_{\bf i} \cap \omega_{\bf j})=\ell} \!\!\!\!\!\!\!m^{rq-d} 
	\leqslant \sum\limits_{{\bf i}\in \mathcal{N}_m^d} \left(3^d - 1\right) m^{rq-d} 
	= \left(3^d - 1\right) m^{rq},
	$$
	we arrive at
	\begin{equation}
		\label{estimatesum2}
		\sum\limits_{{\bf i},{\bf j}\in \mathcal{N}_m^d\atop d({\bf i},{\bf j})=1} 
		\sup\limits_{{\bf k}\in \mathbb{Z}_+^d\atop |{\bf k}|=\lfloor r\rfloor } 
		\int_{\omega_{\bf i}} \int_{\omega_{\bf j}} \frac{\left|D^{{\bf k}} \varphi_{\bf i}({\bf x}) - D^{{\bf k}} \varphi_{\bf j}({\bf y})\right|^q}{\left| {\bf x}-{\bf y} \right|^{\left\{r\right\}q+d}}\,{\rm d}{\bf x}{\rm d}{\bf y}
		\leqslant\left(3^d - 1\right) m^{rq} \left|\varphi\right|_{W_q^r(\Omega)}^q.
	\end{equation}	
	
	Combining~\eqref{estimatesum1},~\eqref{estimatesum3} and \eqref{estimatesum2} we obtain 
	\begin{equation}\label{ue}
		|f_m|_{W^r_q(\Omega)} \leqslant  C_6 m^{r},
	\end{equation}
	where $C_6 := \left(3^d\left|\varphi\right|_{W_q^r(\Omega)}^q +\frac{2^{q+\{r\}+d}\mu_d}{\{r\}q} |\varphi|^q_{W^{\lfloor r\rfloor }_q(\Omega)}\right)^{\frac 1q}$ 
	is independent of $m$. Taking into account \eqref{norm_fmk} we obtain \eqref{norm_fm}. 
		
	Finally, we consider the function $\displaystyle g_m := C_6^{-1}m^{-r} f_m$ and set $N_m := m^d$ and $\varepsilon_m :=\left\|g_m\right\|_{L_\infty(\Omega)}
	=\displaystyle C_6^{-1} m^{-r} \|\varphi\|_{L_\infty(\Omega)}$. 
	Let us show that
	$\displaystyle{\inf\limits_{\triangle \in \mathfrak{D}_{N_m}} \inf\limits_{s\in \mathcal{S}_0\left(\triangle\right)} \|g_m - s\|_{L_\infty(\Omega)} >\frac{\varepsilon_m}{3}}$. Then \eqref{lbeq} will follow in view of \eqref{norm_fm}.
	
	Let $\triangle \in \mathfrak{D}_{N_m}$ be any convex partition comprising at most $N_m$ cells. 
	Assume to the contrary to our claim that $ \|g_m - s\|_{L_\infty(\Omega)}
	\leqslant \frac{\varepsilon_m}{3}$ for some $s\in  \mathcal{S}_0\left(\triangle\right)$.  
	Consider a cell $\delta\in\Delta$ whose closure $\bar\delta$ contains the center ${\bf c}$ of a cube $\omega\in\square_m$. Then $\bar\delta$ is located completely inside $\omega$
	as otherwise we can find a point ${\bf c}'\in\delta$ sufficiently close to the boundary of $\omega$ and a point ${\bf c}''\in\delta$ sufficiently close to  ${\bf c}$, 
	such that $s({\bf c}')=s({\bf c}'')$ and $g_m\left({\bf c}'\right) <\min\limits_{{\bf x}\in\Omega} g_m({\bf x}) + \frac{\varepsilon_m}{12}$, and $g_m\left({\bf c}''\right) > \max\limits_{{\bf x}\in \Omega} g_m({\bf x}) - \frac{\varepsilon_m}{12}$, and hence
	\begin{align*}
		\|g_m - s\|_{L_\infty(\Omega)}
		& \geqslant \max \big\{|g_m({\bf c}') - s({\bf c}') |;|g_m({\bf c}'') - s({\bf c}'') | \big\}\\
		& > \frac12\Big({\max\limits_{{\bf x} \in \Omega} g_m({\bf x})- \min\limits_{{\bf x} \in \Omega} g_m({\bf x})}\Big)- \frac{\varepsilon_m}{6} 
		=\frac{\varepsilon_m}{3},
	\end{align*}
	which contradicts the assumption. Therefore in each of $m^d$ cubes $\omega\in\square_m$ there is at least one cell $\delta$ of $\triangle$ such that
	$\bar\delta$ is located completely inside that cube.
	It follows that the total number of cells in $\triangle$ is greater than $m^d = N_m$, a contradiction to the choice of the partition. $\qed$
\end{proof}

\section{Acknowledgments}

The work of Oleksandr Kozynenko was supported in part by a DAAD Scholarship in the programme Research Grants 
-- Bi-nationally Supervised Doctoral Degrees, 2016/17 (57214225).

\end{document}